\gdef\rtitle{}
\gdef\rauthor{}
\theoremstyle{plain}
\newtheorem{thm}{Theorem}[section]
\newtheorem{es}[thm]{Example}
\newtheorem{cor}[thm]{Corollary}
\newtheorem{lem}[thm]{Lemma}
\newtheorem{prop}[thm]{Proposition}
\theoremstyle{definition}
\newtheorem{oss}[thm]{Remarks}
\newtheorem{os}[thm]{Remark}
\def\ara{\operatorname{ara}}
\def\c{\operatorname{c}}
\def\cd{\operatorname{cd}}
\def\height{\operatorname{ht}}
\def\depth{\operatorname{depth}}
\def\sdim{\operatorname{sdim}}
\def\Spec{\operatorname{Spec}}
\def\Proj{\operatorname{Proj}}
\def\init{\operatorname{in}}
\def\Supp{\operatorname{Supp}}
\def\reg{\operatorname{reg}}
\def\mult{\operatorname{e}}
\def\codim{\operatorname{codim}}
\def\LT{\operatorname{LT}}
\def\GL{\operatorname{GL}}
\def\heigth{\operatorname{ht}}
 \let\oldlabel=\label
\def\prellabel{\marginparsep=1em\marginparwidth=44pt
 \def\label##1{\oldlabel{##1}\ifmmode\else\ifinner\else
 \marginpar{{\footnotesize\ \\ \tt
     ##1}}\fi\fi}}
\begin{document}

\title{Gr\"obner deformations, connectedness and cohomological dimension}
\author{Matteo Varbaro}
\address{Dipartimento di Matematica, Universit\'a  di Genova, Via Dodecaneso 35, I-16146 Genova, Italy}
\email{varbaro@dima.unige.it} \subjclass{}
\date{}
\keywords{}
 \maketitle

\begin{abstract}
In this paper we will compare the connectivity dimension $\c(P/I)$
of an ideal $I$ in a polynomial ring $P$ with that of any initial
ideal of $I$. Generalizing a theorem of Kalkbrener and Sturmfels
\cite{ka-st}, we prove that $\c(P/\LT_{\prec}(I)) \geq \min
\{\c(P/I),\dim(P/I) - 1 \}$ for each monomial order $\prec$. As a
corollary we have that every initial complex of a Cohen-Macaulay
ideal is strongly connected.  Our approach  is based on the study
of the cohomological dimension of an ideal $\mathfrak{a}$ in a
noetherian ring $R$  and its relation with  the connectivity
dimension of $R/\mathfrak{a}$. In particular we prove a
generalized version of a  theorem of Grothendieck \cite{SGA2}. As
consequence of these results we  obtain some necessary conditions
for  open subscheme of a projective scheme to be  affine.
\end{abstract}

\markboth{\large Gr\"obner deformations, connectedness and cohomological dimension}{\large Matteo Varbaro}

 \section*{Introduction}
All rings considered in this paper are  commutative with identity.
Moreover, throughout the paper, we use the following notation:
\begin{itemize}
\item[(a)] $R$ is a noetherian ring; \item[(b)] $\mathfrak{a}
\subseteq R$ is an ideal of $R$; \item[(c)] $P=k[x_1, \ldots,
x_n]$ is a polynomial ring in $n$ variables (with $k$ an arbitrary
field); \item[(d)] $I \subseteq P$ is an ideal of $P$.
\end{itemize}

With a slight abuse of  terminology in the following we say that $I$
is Cohen-Macaulay to mean that $P/I$ is a Cohen-Macaulay ring.
Given a monomial order $\prec$ on $P$ we will denote by
$\LT_{\prec}(I)$ the initial ideal of $I$ with respect to $\prec$.
A main theme in Gr\"{o}bner bases theory is to compare 
$I$ and $\LT_{\prec}(I)$. In this direction a theorem, due to Kalkbrener and
Sturmfels (\cite[Theorem 1]{ka-st}),  asserts that if $I$ is a
prime ideal, then $P/\LT_{\prec}(I)$ is equidimensional, solving a
conjecture of Kredel and Weispfenning (see \cite{kr-we}).
Moreover, if $k$ is algebraically closed, Kalkbrener and Sturmfels
proved also that $P/\LT_{\prec}(I)$ is connected in codimension
$1$, opening up a new line of research. In light of these results  it is natural  to ask, for example, weather  $\LT_\prec(I)$ has some special features when $I$ is Cohen-Macaulay.
To answer this question we generalize in Theorem \ref{martina}   the
result  of Kalkbrener and Sturmfels by  comparing the connectivity
dimension of $P/I$  with that of $P/\LT_{\prec}(I)$.
 Our result is characteristic free and holds also for non algebraically closed fields.
As a corollary we obtain:

\vspace{2mm}

\noindent{\bf{Corollary \ref{skinner}.}}\emph{ Assume that $I$ is
Cohen-Macaulay. Then $P/\LT_{\prec}(I)$ is connected in
codimension 1.}

\vspace{2mm}

To prove these statements we follow the  approach of 
Huneke and Taylor \cite[Appendix 1]{hu-ta}, which makes use  of local cohomology
techniques. In particular we generalize some of the ideas contained in the appendix written by Taylor. But of course we have to refine these ideas  to obtain
a stronger result. Among other things, we  need also
Grothendieck's Connectedness Theorem (see Grothendieck
\cite[Expos\'{e} XIII, Th\'{e}or\`{e}me 2.1]{SGA2} or Brodmann and
Sharp \cite[Theorem 19.2.9]{B-S}) which asserts that if $R$ is
local and complete, then
$$\c(R/\mathfrak{a}) \geq \min \{ \c(R),
\sdim R - 1 \} - \ara(\mathfrak{a})$$ where $\c(\cdot)$ stands for the connectivity dimension,  $\sdim( \cdot )$
for the subdimension and $\ara( \cdot )$ for arithmetical rank,
see Section \ref{chap1} for the definitions.

Since $\ara(\mathfrak{a})$ is bounded below by the cohomological
dimension $\cd(R,\mathfrak{a})$ of $\mathfrak{a}$ , it is natural
to ask whether the Connectedness Theorem holds also with
$\ara(\mathfrak{a})$ replaced by $\cd(R,\mathfrak{a})$. We
prove in  Theorem \ref{bart} that this is indeed the case.   As a
corollary we will recover a theorem of Hochster and Huneke
\cite[Theorem 3.3]{ho-hu}.\\
Theorem \ref{bart} also appears in the paper of Divaani-Aazar,
Naghipour and Tousi \cite[Theorem 2.8]{D-N-T}. However, when we wrote this paper, we were not aware of their result. We illustrate a relevant error in \cite[Theorem 3.4]{D-N-T} in Remark \ref{patty}.

In Subsection \ref{subchap1.2} we  present versions of our results
for positively graded $k$-algebras (see Theorem \ref{winchester}
and Corollary \ref{marge}), and for local rings satisfying Serre's condition $S_2$ (Proposition \ref{serre}).

In Subsection \ref{subchap1.3} we obtain some results in the
context of projective schemes over a field, studying the
cohomological dimension of their open subschemes. In particular,
we give some new necessary conditions for the affineness of these open
subschemes. To this aim, we  use the results of Subsection
\ref{subchap1.2} and the Serre-Grothendieck correspondence.

As a consequence of the main result we establish the Eisenbud-Goto conjecture for a new class of ideals (Remark \ref{eisgoto}), those which do not contain a linear form, are connected in codimension 1 and have a radical initial ideal.

Finally, in the last subsection, we generalize and strengthen a result of Hartshorne (\cite{hartshorne}), which asserts that a Cohen-Macaulay local ring is connected in codimension 1 (Proposition \ref{barney} and Corollary \ref{smithers}).

This paper is an outcome of the author's master thesis written
under the supervision of Aldo Conca.  We thank him for many
helpful suggestions and conversations.

\section{On connectivity and cohomological dimension}
\label{chap1}

In this section we  use some techniques of local cohomology: for
the basic definitions, properties and results consult
Grothendieck's lectures \cite{gro} or \cite{B-S}.

For an $R$-module $M$ let $H_{\mathfrak{a}}^i(M)$, $ i \in
\mathbb{N}$, denote the $i$-th local cohomology module of $M$ with
respect to $\mathfrak{a}$. An interesting integer related to these
local cohomology modules is
\[ \cd(M, \mathfrak{a}):=\sup \{i \in \mathbb{N}: H_{\mathfrak{a}}^i(M) \neq 0 \}. \]
called the cohomological dimension of $\mathfrak{a}$ with respect
to $M$.

We have the bounds
\[\height_M(\mathfrak{a}) \leq \cd(M,\mathfrak{a}) \leq \dim M \]
where $\height_M(\mathfrak{a}):= \min \{\dim M_{\wp}: \wp
\supseteq \mathfrak{a} \}$.

Moreover, it is well known that, for all $R$-modules $M$, we have
\[ \cd(R,\mathfrak{a}) \geq \cd(M,\mathfrak{a}).\]
Hence we  call $\cd(R, \mathfrak{a})$ the cohomological dimension
of $\mathfrak{a}$.

A numerical invariant of $\mathfrak{a}$ related to its
cohomological dimension is
\[ \ara(\mathfrak{a}) :=  \min \{ r \in \mathbb{N}: \mbox{ exist } f_1, \ldots, f_r \in R \mbox{ such that } \sqrt{\mathfrak{a}} = \sqrt{(f_1, \ldots, f_r)} \} \]
called the arithmetical rank of $\mathfrak{a}$; we have
\[\ara(\mathfrak{a}) \geq \cd(R,\mathfrak{a}). \]

Let $\mathfrak{b}$ be an ideal of $R$, and $x \in R$ an element of
$R$. There are two interesting exact sequences: the first is the
Mayer-Vietoris sequence
\begin{gather}
\ldots \longrightarrow H_{\mathfrak{a} \cap \mathfrak{b}}^{i-1}(M)
\longrightarrow H_{\mathfrak{a}+\mathfrak{b}}^i(M)
\longrightarrow H_{\mathfrak{a}}^i(M) \oplus H_{\mathfrak{b}}^i(M) \nonumber\\
\longrightarrow H_{\mathfrak{a} \cap \mathfrak{b}}^i(M)
\longrightarrow H_{\mathfrak{a}+\mathfrak{b}}^{i+1}(M)
\longrightarrow \ldots
\end{gather}
and the second is
\begin{gather}
\ldots \longrightarrow H_{\mathfrak{a}}^{i-1}(M_x) \longrightarrow H_{\mathfrak{a}+(x)}^i(M) \longrightarrow H_\mathfrak{a}^i(M) \nonumber\\
\longrightarrow H_{\mathfrak{a}}^i(M_x) \longrightarrow
H_{\mathfrak{a}+(x)}^{i+1}(M) \longrightarrow \ldots
\end{gather}

As we have anticipated, we  divide this section in three
subsections: in the first subsection  we  prove the stronger version of
Grothendieck's result; in the second subsection we  analyze this result in
more concrete cases, for example when $R$ is a positively graded
$k$-algebra; in the third subsection we gives the previous results in
the language of algebraic geometry.

\subsection{A stronger version of the Connectedness Theorem}
\label{subchap1.1}

 We begin by reviewing the definition of connectivity dimension of a ring.
Let $T$ be a noetherian topological space; the connectivity
dimension $\c(T)$ of $T$ is defined as the integer:
\[ \c(T):= \min \{ \dim Z :
Z \subseteq T, Z \mbox{ is closed and } T\backslash Z \mbox{ is
disconnected} \}
\]
with the convention that the emptyset is disconnected of dimension
$-1$. If, for a positive integer $d$, \ $\c(T) \geq \dim(T)-d$ we
 say that $T$ is connected in codimension $d$. Notice that
this definition is slightly different from that given in \cite{hartshorne}; however in the case which we
examine in this paper, thanks to the fact that we deal with catenary rings, the two notions are
the same.\\
For an $R$-module $M$, we write $\c(M)$ instead of $\c(\Supp(M))$.
For more details about this definition we refer to \cite[Chapter
19]{B-S}.

A notion related to connectivity dimension is the subdimension,
$\sdim T$, of a non-empty noetherian topological space $T$: it is
defined as the minimum of the dimensions of the irreducible
components of $T$. Again, for an $R$-module $M$, we write $\sdim
M$ instead of $\sdim(\Supp(M))$.

\begin{os}\label{hilbert}

We state an elementary result which better explains the concept of
connectivity dimension.

For a noetherian topological space $T$, the following are
equivalent:
\begin{itemize}
\item[(1)]$\c(T) \geq d$; \item[(2)]for each $T'$ and $T''$, irreducible
components of $T$, there exists a sequence $T'=T_0, T_1, \ldots,
T_r=T''$ such that $T_i$ is an irreducible component of $T$ for
all $i=0, \ldots, r$ and $\dim (T_j \cap T_{j-1}) \geq d$ for all
$j= 1, \ldots, r$.
\end{itemize}
The condition in (2) is the characterization of connectivity
dimension used in \cite{ka-st}.

\end{os}

The Connectedness Theorem, whose a proof can be found in
\cite[Expos\'{e} XIII, Th\'{e}or\`{e}me 2.1]{SGA2} or in
\cite[Theorem 19.2.9]{B-S}, follows:

\begin{thm}\label{palladineve}(Grothendieck's Connectedness
Theorem). Let $(R,\mathfrak{m})$ be complete and local. Then
\[ \c(R/\mathfrak{a}) \geq \min \{ \c(R), \sdim R - 1 \} - \ara(\mathfrak{a}) \]
\end{thm}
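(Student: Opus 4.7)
My strategy is induction on $r = \ara(\mathfrak{a})$, reducing the problem to the hypersurface case $r=1$. When $r=0$, the ideal $\mathfrak{a}$ is nilpotent, so $\Supp(R/\mathfrak{a}) = \Spec R$ and $\c(R/\mathfrak{a}) = \c(R)$, which is at least $\min\{\c(R),\sdim R - 1\}$. For the inductive step, write $\sqrt{\mathfrak{a}} = \sqrt{(f_1,\ldots,f_r)}$ and set $\mathfrak{b} = (f_1,\ldots,f_{r-1})$, so that $\ara(\mathfrak{b}) \leq r-1$. The inductive hypothesis applied to $\mathfrak{b}$ yields
\[ \c(R/\mathfrak{b}) \geq \min\{\c(R),\sdim R - 1\} - (r-1), \]
and it remains to apply the hypersurface case to $S := R/\mathfrak{b}$ with the image of $f_r$ to conclude.

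The heart of the argument is the hypersurface case: for every complete local $(S,\mathfrak{n})$ and $f \in S$ I would prove $\c(S/(f)) \geq \min\{\c(S),\sdim S - 1\} - 1$. My plan is to argue by contradiction. Suppose $V(f)$ admits a decomposition $V(f) = V(\mathfrak{a}_1) \cup V(\mathfrak{a}_2)$ with both $V(\mathfrak{a}_i)$ proper and non-empty and with $Z := V(\mathfrak{a}_1 + \mathfrak{a}_2)$ of dimension strictly smaller than $\min\{\c(S),\sdim S - 1\} - 1$. The relations $\sqrt{\mathfrak{a}_1 \cap \mathfrak{a}_2} = \sqrt{(f)}$ and $V(\mathfrak{a}_1) \cap V(\mathfrak{a}_2) = Z$ put us in position to apply the Mayer--Vietoris sequence of local cohomology to the pair $(\mathfrak{a}_1,\mathfrak{a}_2)$. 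Combining this with Hartshorne's formal connectedness principle, according to which the number of connected components of the punctured spectrum of a complete local ring is detected by $H^0$ and $H^1$ of appropriate local cohomology modules, one derives a contradiction between the smallness of $\dim Z$ and the hypothesis $\c(S) \geq \dim Z + 2$.

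The main obstacle I expect is twofold. First, the cohomological comparison in the hypersurface case must be arranged so that the \emph{subdimension} $\sdim S$ plays its role correctly: pure dimension bounds are insufficient when $S$ has components of differing dimensions, and it is precisely the term $\sdim S - 1$ in the statement that forces us to control each minimal prime individually. Second, a subtle bookkeeping issue arises in passing from $R$ to $S = R/\mathfrak{b}$ during the induction, since $\sdim(R/\mathfrak{b})$ may be strictly smaller than $\sdim R$; one must either refine the induction (for instance by arguing separately on the irreducible components of $\Spec R$) or verify carefully by cases that the hypersurface estimate chains together $r$ times to yield the advertised bound.
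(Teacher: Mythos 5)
Your outer reduction is sound but it is not where the content of the theorem lives, and the part that carries the content is left open. The induction on $r=\ara(\mathfrak{a})$ does chain correctly: the base case $r=0$ is as you say, and the bookkeeping worry you raise about $\sdim(R/\mathfrak{b})$ is in fact resolvable --- each minimal prime $\mathfrak{q}$ of $\mathfrak{b}=(f_1,\dots,f_{r-1})$ satisfies $\height(\mathfrak{q}/\wp)\leq r-1$ over some minimal prime $\wp$ of $R$ by Krull's height theorem, and since complete local domains are catenary and equidimensional this gives $\sdim(R/\mathfrak{b})\geq \sdim R-(r-1)$, whence $\min\{\c(R/\mathfrak{b}),\sdim(R/\mathfrak{b})-1\}\geq\min\{\c(R),\sdim R-1\}-(r-1)$ and the hypersurface estimate composes $r$ times. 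So obstacle two is not an obstacle. The genuine gap is the hypersurface case itself, which \emph{is} the theorem. Your Mayer--Vietoris argument, combined with the principle that connectedness of the punctured spectrum is detected in low cohomological degree (really the Hartshorne--Lichtenbaum vanishing theorem applied to $H^{\dim S}_{\mathfrak{a}_i}(S)$), only produces a contradiction when the disconnecting set $Z$ is the closed point \emph{and} $S$ is a domain: there one gets $H^{\dim S}_{\mathfrak{n}}(S)\neq 0$ forced into $H^{\dim S-1}_{(f)}(S)$, contradicting $\ara((f))\leq 1$ once $\dim S\geq 3$. For $\dim Z>0$ you must run a further induction on $\dim Z$, cutting by an element of $\mathfrak{n}$ avoiding the minimal primes of $\mathfrak{a}_1$, $\mathfrak{a}_2$ and $\mathfrak{a}_1+\mathfrak{a}_2$ and using the exact sequence relating $H^i_{\mathfrak{c}+(x)}$ and $H^i_{\mathfrak{c}}$; and for reducible $S$ you must pass to a carefully chosen minimal prime (via the combinatorial description of $\c$ in Lemma \ref{milhouse}(a)) together with the Independence Theorem and catenarity --- this last step is exactly what manufactures the term $\min\{\c(S),\sdim S-1\}$, and you name it as a difficulty without supplying the argument. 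Neither step is routine; they occupy Propositions \ref{homer} and \ref{lisa} here.

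For comparison: the paper does not prove Theorem \ref{palladineve} by induction on $\ara$. It proves the stronger Theorem \ref{bart}, with $\ara(\mathfrak{a})$ replaced by $\cd(R,\mathfrak{a})$, by splitting the minimal primes of $\mathfrak{a}$ into two families realizing $\c(R/\mathfrak{a})$ and applying a single Mayer--Vietoris-based estimate (Proposition \ref{lisa}, built on Proposition \ref{homer} by induction on $\dim R/(\mathfrak{a}+\mathfrak{b})$); Theorem \ref{palladineve} then follows from $\ara(\mathfrak{a})\geq\cd(R,\mathfrak{a})$. Your fully fleshed-out hypersurface case would reproduce essentially those two propositions in the special case $\cd\leq 1$, so the induction on $\ara$ buys nothing over a single application of the sharper statement --- and it cannot recover the $\cd$ version, since $\cd$ does not decrease by one when a single generator is removed.
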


So it is natural ask weather the inequality of the above theorem still hold with $\ara(\mathfrak{a})$ replaced by $\cd(R,\mathfrak{a})$.
As we  show below in  Theorem \ref{bart},  the answer to the above question
is affirmative.  To prove Theorem \ref{bart}  we
follow the lines  of the proof of  \cite[Theorem 19.2.9]{B-S},
underlining the necessary changes.

We first prove a proposition which relates the cohomological
dimension of the intersection of two ideals with the dimension of
their sum (corresponding to \cite[Proposition 19.2.7]{B-S}).

\begin{prop}\label{homer}

Let $(R,\mathfrak{m})$ be a complete local domain and let
$\mathfrak{b}$ be an ideal of $R$. Assume that $\min \{ \dim
R/\mathfrak{a}, \dim R/\mathfrak{b} \}
>\dim R/(\mathfrak{a}+\mathfrak{b})$. Then
\[ \cd(R, \mathfrak{a} \cap \mathfrak{b}) \geq \dim R - \dim R/(\mathfrak{a}+\mathfrak{b}) - 1 \]

\end{prop}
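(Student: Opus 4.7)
The plan is to prove the bound by induction on $s := \dim R/(\mathfrak{a}+\mathfrak{b})$, using Mayer-Vietoris in the base case and the second long exact sequence of the excerpt in the inductive step.

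For the base case $s=0$, the ideal $\mathfrak{a}+\mathfrak{b}$ is $\mathfrak{m}$-primary, so $H^d_{\mathfrak{a}+\mathfrak{b}}(R) = H^d_\mathfrak{m}(R) \neq 0$. Because $R$ is a complete local domain and $\dim R/\mathfrak{a}, \dim R/\mathfrak{b} \geq 1$, the Hartshorne-Lichtenbaum vanishing theorem gives $H^d_\mathfrak{a}(R) = H^d_\mathfrak{b}(R) = 0$. The Mayer-Vietoris sequence at degree $d$ then yields a surjection $H^{d-1}_{\mathfrak{a}\cap\mathfrak{b}}(R) \twoheadrightarrow H^d_\mathfrak{m}(R)$, so $H^{d-1}_{\mathfrak{a}\cap\mathfrak{b}}(R) \neq 0$ and the required inequality holds.

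For the inductive step, assume $s \geq 1$ and the statement for $s-1$. By prime avoidance, pick $x \in \mathfrak{m}$ lying outside every minimal prime of maximal coheight of $\mathfrak{a}$, of $\mathfrak{b}$, and of $\mathfrak{a}+\mathfrak{b}$; this is possible because all three of $\dim R/\mathfrak{a}, \dim R/\mathfrak{b}, \dim R/(\mathfrak{a}+\mathfrak{b})$ are at least $1$, so none of the relevant primes equals $\mathfrak{m}$. A standard Krull dimension argument shows that setting $\mathfrak{a}' := \mathfrak{a}+(x)$ and $\mathfrak{b}' := \mathfrak{b}+(x)$ drops each of $\dim R/\mathfrak{a}', \dim R/\mathfrak{b}', \dim R/(\mathfrak{a}'+\mathfrak{b}')$ by exactly one, so the inductive hypothesis applied to $(\mathfrak{a}',\mathfrak{b}')$ gives $\cd(R, \mathfrak{a}'\cap\mathfrak{b}') \geq d-s$. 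A direct check of zero-sets shows $\sqrt{\mathfrak{a}'\cap\mathfrak{b}'} = \sqrt{(\mathfrak{a}\cap\mathfrak{b})+(x)}$, so setting $j_0 := \cd(R, (\mathfrak{a}\cap\mathfrak{b})+(x)) \geq d-s$, we have $H^{j_0}_{(\mathfrak{a}\cap\mathfrak{b})+(x)}(R) \neq 0$. Now plug the ideal $\mathfrak{a}\cap\mathfrak{b}$ and the element $x$ into the second exact sequence of the excerpt at $i = j_0$:
\[ H^{j_0-1}_{\mathfrak{a}\cap\mathfrak{b}}(R_x) \longrightarrow H^{j_0}_{(\mathfrak{a}\cap\mathfrak{b})+(x)}(R) \longrightarrow H^{j_0}_{\mathfrak{a}\cap\mathfrak{b}}(R). \]
If the right-hand map is non-zero, then $H^{j_0}_{\mathfrak{a}\cap\mathfrak{b}}(R) \neq 0$ and $\cd(R,\mathfrak{a}\cap\mathfrak{b}) \geq j_0 \geq d-s$; otherwise the left-hand map is surjective, so $H^{j_0-1}_{\mathfrak{a}\cap\mathfrak{b}}(R)_x = H^{j_0-1}_{\mathfrak{a}\cap\mathfrak{b}}(R_x) \neq 0$, forcing $H^{j_0-1}_{\mathfrak{a}\cap\mathfrak{b}}(R) \neq 0$ and $\cd(R,\mathfrak{a}\cap\mathfrak{b}) \geq j_0 - 1 \geq d-s-1$.

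The main obstacle is engineering a single element $x$ that simultaneously reduces the three dimensions by exactly one; this relies on the catenarity of the complete local domain $R$ and careful bookkeeping of which minimal primes must be avoided. Once $x$ is in hand, the rest is essentially formal, combining the two exact sequences in the excerpt with the Hartshorne-Lichtenbaum vanishing theorem.
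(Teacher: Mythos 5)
Your proposal is correct and follows essentially the same route as the paper: Mayer--Vietoris plus Hartshorne--Lichtenbaum vanishing in the base case, then adding a carefully chosen element $x$ and using the second exact sequence in the inductive step. The only cosmetic differences are that you avoid just the minimal primes of maximal coheight (the paper avoids all minimal primes of $\mathfrak{a}$, $\mathfrak{b}$, $\mathfrak{a}+\mathfrak{b}$) and that you spell out the flat-base-change identification $H^{j_0-1}_{\mathfrak{a}\cap\mathfrak{b}}(R_x)\cong H^{j_0-1}_{\mathfrak{a}\cap\mathfrak{b}}(R)_x$, neither of which changes the argument.
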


\begin{proof}

Set $n:= \dim R$ and $d:= \dim R/(\mathfrak{a}+\mathfrak{b})$, and
we induct upon $d$. If $d=0$ we consider the Mayer-Vietoris
sequence, Equation (1),
\[ \ldots \longrightarrow H_{\mathfrak{a} \cap \mathfrak{b}}^{n-1}(R)  \longrightarrow H_{\mathfrak{a}+\mathfrak{b}}^n(R) \longrightarrow H_{\mathfrak{a}}^n(R) \oplus H_{\mathfrak{b}}^n(R) \longrightarrow \ldots \]
Since $R$ is a complete domain and since $\dim R/\mathfrak{a} > 0$
and $\dim R/\mathfrak{b} > 0$ we can use the
Hartshorne-Lichtenbaum theorem (see \cite[Theorem 8.2.1]{B-S}) to
deduce that $H_{\mathfrak{a}}^n(R)=H_{\mathfrak{b}}^n(R)=0$.
Moreover, since $d=0$, \ $\sqrt{\mathfrak{a}+\mathfrak{b}} =
\mathfrak{m}$, so we have $H_{\mathfrak{a}+\mathfrak{b}}^n(R) \neq
0$ (see \cite[Theorem 6.1.4]{B-S}). Then we must have $H_{\mathfrak{a} \cap
\mathfrak{b}}^{n-1}(R) \neq 0$, hence $\cd(R, \mathfrak{a}\cap
\mathfrak{b}) \geq n-1$.

Let, now, $d>0$. The difference between our proof and that in \cite[Proposition 19.2.7]{B-S} is in this step.\\
We can choose $x \in \mathfrak{m}$, $x$ not in any minimal prime
of $\mathfrak{a}, \mathfrak{b}$ and $\mathfrak{a}+\mathfrak{b}$.
Then let $\mathfrak{a}':=\mathfrak{a}+(x)$ and
$\mathfrak{b}':=\mathfrak{b}+(x)$. From the choice of $x$ follows
that $\dim R/(\mathfrak{a}'+\mathfrak{b}')=d-1$, $\dim
R/\mathfrak{a}'=\dim R/\mathfrak{a}-1
>d-1$ and $\dim R/\mathfrak{b}'=\dim R/\mathfrak{b}-1>d-1$; hence by induction we have
$s := \cd(R,\mathfrak{a}' \cap \mathfrak{b}') \geq n-d$.

Since $\sqrt{\mathfrak{a} \cap \mathfrak{b} + (x)}=
\sqrt{\mathfrak{a}' \cap \mathfrak{b}'}$, then $H_{\mathfrak{a}'
\cap \mathfrak{b}'}^i(R)=H_{\mathfrak{a} \cap \mathfrak{b} +
(x)}^i(R)$ for all $i \in \mathbb{N}$, so in this case the exact
sequence in Equation (2) becomes
\[ \ldots \longrightarrow H_{\mathfrak{a} \cap \mathfrak{b}}^{s-1}(R_x) \longrightarrow H_{\mathfrak{a}' \cap \mathfrak{b}'}^s(R)
\longrightarrow H_{\mathfrak{a} \cap \mathfrak{b}}^s(R)
\longrightarrow \ldots
\]
We have $H_{\mathfrak{a}' \cap \mathfrak{b}'}^s(R) \neq 0$, hence
$H_{\mathfrak{a} \cap \mathfrak{b}}^s(R) \neq 0$ or
$H_{\mathfrak{a} \cap \mathfrak{b}}^{s-1}(R_x) \neq 0$, so
$\cd(R,\mathfrak{a} \cap \mathfrak{b}) \geq s-1 \geq n-d-1$.

\end{proof}

Our goal, now, is to generalize Proposition \ref{homer} to the
case when $R$ is not necessarily a domain.\\
To this purpose we need the following useful lemma.

\begin{lem}\label{milhouse}

Let $T$ be a non-empty noetherian topological space.
\begin{itemize}
\item[(a)] For $r \in \mathbb{N}$, denote by $\mathcal{S}(r)$ the
set of all ordered pairs $(A,B)$ of non-empty subsets of $\{ 1,
\ldots, r \}$ for which $A \cup B = \{ 1, \ldots, r \}$; if $T_1,
\ldots, T_r$ are the irreducible components of $T$, then
\[ \c(T)=\min \{ \dim (( \cup_{i \in A}T_i) \cap( \cup_{j \in B}T_j): (A,B) \in \mathcal{S}(r) \} \]
\item[(b)] $\c(T) \leq \sdim T$. Moreover, if $T$ has finite
dimension, equality holds here if and only if $T$ is irreducible.
\end{itemize}

\end{lem}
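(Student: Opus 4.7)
The plan is to prove the equality in (a) by showing both inequalities, and then to derive (b) as a quick consequence of (a).

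For the bound $\c(T)\le\min_{\mathcal{S}(r)}\dim(T_A\cap T_B)$, my first step is to reduce to the case where $(A,B)$ is a partition of $\{1,\ldots,r\}$: given any $(A,B)\in\mathcal{S}(r)$, replacing it by $(A\setminus B,B)$ or $(A,B\setminus A)$ (which still lies in $\mathcal{S}(r)$ provided the relevant set is non-empty, the trivial remaining case being $A=B=\{1,\ldots,r\}$) can only shrink $T_A\cap T_B$. For a partition one has
\[T\setminus(T_A\cap T_B)=(T\setminus T_A)\sqcup(T\setminus T_B),\]
a disjoint union of two opens which are non-empty because, for $i\in A$, the irreducible component $T_i$ is not contained in the proper closed subset $T_B$; hence this genuinely disconnects $T$ and witnesses $\c(T)\le\dim(T_A\cap T_B)$.

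For the reverse inequality, given a closed $Z\subseteq T$ with $T\setminus Z=U_1\sqcup U_2$ a disjoint union of two non-empty opens, the key observation I would use is that every non-empty open subset of an irreducible noetherian space is irreducible (its closure is the ambient space) and hence connected. Applied to each irreducible component $T_i$ this forces $T_i\setminus Z$ to lie entirely in $U_1$, entirely in $U_2$, or to be empty (the last case happening exactly when $T_i\subseteq Z$). Setting
\[A=\{i:T_i\cap U_2=\emptyset\},\qquad B=\{i:T_i\cap U_1=\emptyset\},\]
one immediately checks that $(A,B)\in\mathcal{S}(r)$ (both are non-empty, for otherwise $U_2$ or $U_1$ would be empty, and their union is all of $\{1,\ldots,r\}$). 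The main step, which I expect to be the only real obstacle, is a short case distinction on whether $i\in A$ and $j\in B$ lie in $A\cap B$, $A\setminus B$, or $B\setminus A$, to show $T_i\cap T_j\subseteq Z$ and hence $T_A\cap T_B\subseteq Z$. The degenerate situation $T\setminus Z=\emptyset$ is absorbed by the trivial pair $(\{1,\ldots,r\},\{1,\ldots,r\})$.

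For (b), the bound $\c(T)\le\sdim T$ follows from (a) by taking $A=\{1,\ldots,r\}$ and $B=\{i_0\}$ with $\dim T_{i_0}=\sdim T$, so that $T_A\cap T_B=T_{i_0}$. The equivalence for equality is also obtained from (a): if $T$ is irreducible then $r=1$ and the unique pair in $\mathcal{S}(1)$ yields $\c(T)=\dim T=\sdim T$; conversely, if $r\ge 2$ and $\dim T<\infty$, the partition $A=\{i_0\}$, $B=\{1,\ldots,r\}\setminus\{i_0\}$ gives $T_A\cap T_B=\bigcup_{k\ne i_0}(T_{i_0}\cap T_k)$, a finite union of proper closed subsets of the finite-dimensional irreducible space $T_{i_0}$, each of dimension strictly less than $\sdim T$, so $\c(T)<\sdim T$.
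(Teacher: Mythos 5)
Your argument is correct and complete. Note that the paper does not actually prove this lemma: it simply cites Brodmann--Sharp, \cite[Lemma 19.1.15]{B-S} for (a) and \cite[Lemma 19.2.2]{B-S} for (b), so there is no in-text proof to compare against; your proof is the standard one and is essentially what those references contain. Both directions of (a) check out: the reduction to partitions $(A\setminus B,B)$ is legitimate (the excluded case $A=B=\{1,\ldots,r\}$ being covered by the convention $\c(T)\leq\dim T$), the two displayed opens are non-empty because an irreducible component cannot lie in the union of the others, and in the reverse direction the inclusion $T_A\cap T_B\subseteq Z$ is immediate from the definitions of $A$ and $B$ (no real case distinction is needed: a point of $T_i\cap T_j$ with $i\in A$, $j\in B$ can lie in neither $U_1$ nor $U_2$). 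For (b), the only point to keep explicit is that in the converse direction the index $i_0$ must be the one realizing $\sdim T$, so that proper closed subsets of the finite-dimensional irreducible space $T_{i_0}$ have dimension strictly below $\sdim T$; as written you do use that choice, so the argument stands.
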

A proof of  (a) can be found in  \cite[Lemma 19.1.15]{B-S} and a
proof of (b) can be found in  \cite[Lemma 19.2.2]{B-S}. Now we are
ready to generalize Proposition \ref{homer}.

\begin{prop}\label{lisa}

Let $(R,\mathfrak{m})$ be complete and local and $\mathfrak{b}$ an
ideal of $R$. Assume that $\min \{ \dim R/\mathfrak{a}, \dim
R/\mathfrak{b}\}
>\dim R/(\mathfrak{a}+\mathfrak{b})$. Then
\[ \cd(R, \mathfrak{a} \cap \mathfrak{b})  \geq \min \{ \c(R), \sdim R-1 \} - \dim R/(\mathfrak{a}+\mathfrak{b}) \]

\end{prop}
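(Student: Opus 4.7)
The plan is to derive Proposition \ref{lisa} from Proposition \ref{homer} by induction on the number $r$ of minimal primes of $R$. Since local cohomology, the connectivity dimension, and the subdimension are unchanged on passing from $R$ to its quotient by the nilradical, I may assume $R$ is reduced. For $r = 1$ the ring $R$ is itself a complete local domain, so Proposition \ref{homer} applies directly; noting that $\c(R) = \sdim R = \dim R$ here, the bound $\dim R - d - 1$ provided by that proposition (with $d := \dim R/(\mathfrak{a} + \mathfrak{b})$) coincides with $\min\{\c(R), \sdim R - 1\} - d$.

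For $r \geq 2$, Lemma \ref{milhouse}(b) forces $\c(R) < \sdim R$, so the target reduces to $\cd(R, \mathfrak{a} \cap \mathfrak{b}) \geq \c(R) - d$. Writing $\mathfrak{c}_S := \bigcap_{i \in S} P_i$ and invoking Lemma \ref{milhouse}(a), I would choose a disjoint partition $\{1, \ldots, r\} = A \sqcup B$, both parts nonempty, realizing $\c(R) = \dim R/(\mathfrak{c}_A + \mathfrak{c}_B)$. Since $R$ is reduced, $\mathfrak{c}_A \cap \mathfrak{c}_B = 0$, producing the short exact sequence
$$0 \longrightarrow R \longrightarrow R/\mathfrak{c}_A \oplus R/\mathfrak{c}_B \longrightarrow R/(\mathfrak{c}_A + \mathfrak{c}_B) \longrightarrow 0$$
and its associated long exact sequence of local cohomology with support in $\mathfrak{a} \cap \mathfrak{b}$. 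Applying the inductive hypothesis to $R/\mathfrak{c}_A$ and $R/\mathfrak{c}_B$, each with strictly fewer than $r$ minimal primes, gives non-vanishing of their local cohomology in some degree $t \geq \c(R) - d$. The bound $\cd(R/(\mathfrak{c}_A + \mathfrak{c}_B), \mathfrak{a} \cap \mathfrak{b}) \leq \dim R/(\mathfrak{c}_A + \mathfrak{c}_B) = \c(R)$ then kills the middle term in degrees exceeding $\c(R)$, so a diagram chase in the long exact sequence transfers this non-vanishing to $R$ itself.

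The main obstacle is twofold. First, to apply the inductive hypothesis on the two quotients one needs the hypothesis $\min\{\dim R/\mathfrak{a}, \dim R/\mathfrak{b}\} > \dim R/(\mathfrak{a} + \mathfrak{b})$ to pass to $R/\mathfrak{c}_A$ and $R/\mathfrak{c}_B$, and one needs $\min\{\c(R/\mathfrak{c}_A), \sdim R/\mathfrak{c}_A - 1\}$ to be at least $\c(R)$; neither is automatic for an arbitrary optimal partition, since $\c(R/\mathfrak{c}_A)$ can strictly decrease below $\c(R)$. A practical workaround is to take $A = \{i\}$ a singleton indexing a minimal prime $P_i$ with $\dim R/P_i = \sdim R$, so that $R/P_i$ is a complete local domain on which Proposition \ref{homer} applies directly, yielding even the stronger bound $\sdim R - 1 - d \geq \c(R) - d$; the other side $R/\mathfrak{c}_B$, with $r-1$ minimal primes, is then handled by the inductive hypothesis. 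Second, the degenerate subcases where neither Proposition \ref{homer} nor the inductive hypothesis applies (because the dimension condition fails for every choice of minimal prime) must be settled by a separate ad hoc argument, most naturally by further reducing $R$ to a quotient in which the condition is restored.
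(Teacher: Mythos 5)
Your reduction to the reduced case and the base case $r=1$ are fine, and you are right that the statement amounts to globalizing Proposition \ref{homer} over the minimal primes of $R$. But the induction you set up does not close, and the gap is exactly the case you defer to ``a separate ad hoc argument'': the case in which for \emph{every} minimal prime $\wp_i$ of $R$ one has $\dim R/(\mathfrak{a}+\wp_i)\leq d$ or $\dim R/(\mathfrak{b}+\wp_i)\leq d$, where $d=\dim R/(\mathfrak{a}+\mathfrak{b})$. There your singleton workaround fails: Proposition \ref{homer} applied to the domain $R/\wp_i$ requires $\min\{\dim R/(\mathfrak{a}+\wp_i),\dim R/(\mathfrak{b}+\wp_i)\}>\dim R/(\mathfrak{a}+\mathfrak{b}+\wp_i)$, which is precisely what is being denied; the inductive hypothesis on $R/\mathfrak{c}_B$ can fail for the same reason, and even when it applies it only yields a bound in terms of $\c(R/\mathfrak{c}_B)$, which may be strictly smaller than $\c(R)$. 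This is not a fringe degeneracy but the main difficulty, and ``reducing to a quotient in which the condition is restored'' is not an argument. The paper disposes of this case by a different, non-inductive device: partition the minimal primes into those with $\dim R/(\mathfrak{a}+\wp_i)\leq d$ and the rest (which then satisfy $\dim R/(\mathfrak{b}+\wp_j)\leq d$), form $K=(\cap_{i\le s}\wp_i)+(\cap_{j>s}\wp_j)$, and take a minimal prime $\wp$ of $K$ with $\dim R/\wp=\dim R/K\geq \c(R)$ by Lemma \ref{milhouse}(a). Since $\wp$ contains one $\wp_i$ from each block, both $\dim R/(\mathfrak{a}+\wp)$ and $\dim R/(\mathfrak{b}+\wp)$ are at most $d$, hence $\dim R/((\mathfrak{a}\cap\mathfrak{b})+\wp)\leq d$, and catenarity of $R/\wp$ gives $\height(((\mathfrak{a}\cap\mathfrak{b})+\wp)/\wp)\geq \c(R)-d$; one concludes from $\cd\geq\height$ on the domain $R/\wp$ together with the Independence Theorem. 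Some version of this idea is what your outline is missing. (Your other case --- some $\wp_i$ with both dimensions exceeding $d$ --- is essentially the paper's second case, where $R/\wp_i$ is fed to Proposition \ref{homer} and gives the bound $\sdim R-1-d$.)

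A secondary problem: the long exact sequence step does not do what you claim. To kill $H^t_{\mathfrak{a}\cap\mathfrak{b}}(R/(\mathfrak{c}_A+\mathfrak{c}_B))$ by Grothendieck vanishing you need $t>\dim R/(\mathfrak{c}_A+\mathfrak{c}_B)=\c(R)$, whereas the non-vanishing degree supplied by the inductive hypothesis is only guaranteed to be $\geq \c(R)-d\leq \c(R)$. This step is repairable --- indeed superfluous --- because $\cd(R,\mathfrak{a}\cap\mathfrak{b})\geq\cd(M,\mathfrak{a}\cap\mathfrak{b})$ for every $R$-module $M$ (as recalled at the beginning of Section \ref{chap1}), so non-vanishing on a quotient of $R$ transfers to $R$ with no exact sequence at all. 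But the case analysis above remains the essential missing ingredient.
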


\begin{proof}

Set $d:= \dim R/(\mathfrak{a}+\mathfrak{b})$, and let $\wp_1,
\ldots \wp_n$ be the minimal primes of $R$.

We first assume that for all $i \in \{ 1, \ldots, n \}$ we have
$\dim R/(\mathfrak{a}+ \wp_i) \leq d$ or $\dim R/(\mathfrak{b}+
\wp_i) \leq d$. After a rearrangement we choose $s:= \sup \{ t \in
\{ 1, \ldots, n \} \mbox{ such}$ $\mbox{that } \dim
R/(\mathfrak{a}+\wp_i) \leq d \mbox{ for all } i \leq t \}$.
Notice that $1 \leq s \leq n - 1$, since we have $\max \{ \dim
R/(\mathfrak{b}+ \wp_k): k \in \{ 1, \ldots , n \} \}= \dim
R/\mathfrak{b}
> d$ and  $\max \{ \dim R/(\mathfrak{a}+
\wp_k): k \in \{ 1, \ldots , n \} \}= \dim R/\mathfrak{a}
> d$; hence $(\{ 1, \ldots, s \} , \{ s+1 ,
\ldots , n \}) \in \mathcal{S}(n)$ (with the notation of Lemma
\ref{milhouse}). We define the ideal of $R$
\[ K:=(\wp_1 \cap \ldots \cap \wp_s) + (\wp_{s+1} \cap \ldots \cap
\wp_n) \] and let $\wp$ be a minimal prime of $K$ such that $\dim
R / \wp= \dim R/K$. By Lemma \ref{milhouse} (a), $\dim R/\wp \geq
\c(R)$. Moreover, since there exist $i \in \{ 1, \ldots, s \}$ and
$j \in \{ s+1, \ldots, n \}$ such that $\wp_i \subseteq \wp$ e
$\wp_j \subseteq \wp$, we have
\[ \dim R/(\mathfrak{a} + \wp) \leq \dim R/(\mathfrak{a} + \wp_i) \leq d \]
\[ \dim R/(\mathfrak{b} + \wp) \leq \dim R/(\mathfrak{b} + \wp_j) \leq d. \]
The injection $R/((\mathfrak{a} + \wp) \cap (\mathfrak{b} +
\wp)) \hookrightarrow R/(\mathfrak{a} + \wp) \oplus
R/(\mathfrak{b} + \wp)$ implies $\dim (R/((\mathfrak{a} + \wp)
\cap (\mathfrak{b} + \wp))) \leq d$ and since $\sqrt{(\mathfrak{a} +
\wp) \cap (\mathfrak{b} + \wp)}= \sqrt{(\mathfrak{a} \cap
\mathfrak{b}) + \wp}$, we have
\[ \dim R/ ((\mathfrak{a} \cap \mathfrak{b}) + \wp) = \dim R/((\mathfrak{a} + \wp) \cap (\mathfrak{b} + \wp)) \leq d.\]
But $R/\wp$ is catenary, (see the book of Matsumura, \cite[Theorem
29.4 (ii)]{matsu}), then
\[ \dim R/ ((\mathfrak{a} \cap \mathfrak{b}) + \wp) = \dim R/ \wp - \height((\mathfrak{a} \cap \mathfrak{b}) + \wp)/\wp) \]
and hence
\[ \height((\mathfrak{a} \cap \mathfrak{b}) + \wp)/\wp) \geq \c(R) - d. \]
So $\cd(R/\wp,((\mathfrak{a} \cap \mathfrak{b}) + \wp)/\wp) \geq
\c(R)-d$, and using the Independence Theorem (\cite[Theorem
4.2.1]{B-S})
\[ \cd(R,\mathfrak{a} \cap
\mathfrak{b})\geq \cd(R/\wp,((\mathfrak{a} \cap \mathfrak{b}) +
\wp)/\wp) \geq \c(R)-d.
\]
Now we discuss the case where there exists $i \in \{ 1, \ldots, n \}$
such that $\dim R/(\mathfrak{a}+\wp_i)>d$ and $\dim
R/(\mathfrak{b}+\wp_i)>d$. We use the Proposition \ref{homer},
considering $R/\wp_i$ as $R$, and $(\mathfrak{a}+\wp_i)/\wp_i$ and
$(\mathfrak{b}+\wp_i)/\wp_i$ as $\mathfrak{a}$ and $\mathfrak{b}$.
Then
\[ d \geq \dim R/\wp_i- \cd( R/\wp_i,( (\mathfrak{a}+\wp_i) \cap (\mathfrak{b}+\wp_i) ) / \wp_i)-1.\]
But $\cd( R/\wp_i,( (\mathfrak{a}+\wp_i) \cap (\mathfrak{b}+\wp_i)
) / \wp_i) = \cd( R/\wp_i,( (\mathfrak{a} \cap \mathfrak{b})+\wp_i
) / \wp_i) \leq \cd( R, \mathfrak{a} \cap \mathfrak{b})$, and
obviously $\dim R/\wp_i \geq \sdim R$, hence
\[ d \geq \sdim R -1 -\cd(R,\mathfrak{a} \cap \mathfrak{b}).\]

\end{proof}

Finally we are able to prove the stronger version of Connectedness
Theorem.

\begin{thm}\label{bart}

Let $(R,\mathfrak{m})$ be complete and local. Then
\[ \c(R/\mathfrak{a}) \geq \min \{ \c(R), \sdim R - 1 \} - \cd(R , \mathfrak{a}) \]

\end{thm}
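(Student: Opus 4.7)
The plan is to deduce Theorem \ref{bart} from Proposition \ref{lisa} in essentially the same way that Grothendieck's original Theorem \ref{palladineve} is obtained from \cite[Proposition 19.2.7]{B-S}: partition the minimal primes of $\mathfrak{a}$ to realize $\c(R/\mathfrak{a})$, then apply Proposition \ref{lisa} to the two resulting ideals. The case where $V(\mathfrak{a})$ is irreducible is handled separately via a height argument.

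If $V(\mathfrak{a})$ is irreducible, with unique minimal prime $\wp$, then $c := \c(R/\mathfrak{a}) = \dim R/\wp$ by Lemma \ref{milhouse}(b). Picking any minimal prime $\wp_0 \subseteq \wp$ of $R$, the catenarity of the complete local ring $R$ (\cite[Theorem 29.4]{matsu}) applied to the local domain $R/\wp_0$ yields $\height(\wp/\wp_0) + \dim R/\wp = \dim R/\wp_0 \geq \sdim R$, and hence $\cd(R, \mathfrak{a}) \geq \height \wp \geq \sdim R - c$, which rearranges to the claim.

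In the reducible case I would assume that $\mathfrak{a}$ has minimal primes $\wp_1, \ldots, \wp_m$ with $m \geq 2$ and $c < \min\{\c(R), \sdim R - 1\}$ (otherwise the bound is immediate). Using Lemma \ref{milhouse}(a), pick $(A, B) \in \mathcal{S}(m)$ with $c = \dim R/(\mathfrak{a}_1 + \mathfrak{a}_2)$, where $\mathfrak{a}_1 := \bigcap_{i \in A} \wp_i$ and $\mathfrak{a}_2 := \bigcap_{j \in B} \wp_j$. Any $k \in A \cap B$ would force $\wp_k \supseteq \mathfrak{a}_1 + \mathfrak{a}_2$, hence $c \geq \dim R/\wp_k \geq \sdim R/\mathfrak{a}$; combined with $c \leq \sdim R/\mathfrak{a}$ (Lemma \ref{milhouse}(b)), this would make $V(\mathfrak{a})$ irreducible, contradicting $m \geq 2$. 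So $(A, B)$ is a genuine partition.

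For any $i \in A$ and $j \in B$ the minimal primes $\wp_i, \wp_j$ are incomparable, so $\wp_i \subsetneq \wp_i + \wp_j$ in the catenary local domain $R/\wp_i$, whence $\dim R/(\wp_i + \wp_j) < \dim R/\wp_i$. Taking maxima gives $\dim R/(\mathfrak{a}_1 + \mathfrak{a}_2) < \min\{\dim R/\mathfrak{a}_1, \dim R/\mathfrak{a}_2\}$, verifying the hypothesis of Proposition \ref{lisa}. That proposition then yields
$$\cd(R, \mathfrak{a}_1 \cap \mathfrak{a}_2) \geq \min\{\c(R), \sdim R - 1\} - c,$$
and since $\sqrt{\mathfrak{a}_1 \cap \mathfrak{a}_2} = \sqrt{\mathfrak{a}}$ implies $\cd(R, \mathfrak{a}_1 \cap \mathfrak{a}_2) = \cd(R, \mathfrak{a})$, rearranging gives the theorem. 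The main delicacy I expect is the irreducible/reducible case split and ensuring the partition in Lemma \ref{milhouse}(a) is genuine; the catenarity step itself is a short calculation.
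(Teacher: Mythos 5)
Your proposal is correct and follows essentially the same route as the paper: the case of a single minimal prime is handled by catenarity and the bound $\cd(R,\mathfrak{a})\geq\height_R(\mathfrak{a})$, and the case of several minimal primes by choosing $(A,B)$ via Lemma \ref{milhouse}(a) and applying Proposition \ref{lisa} to $\mathcal{J}=\cap_{i\in A}\wp_i$ and $\mathcal{K}=\cap_{j\in B}\wp_j$. You are in fact somewhat more careful than the paper in checking that $A$ and $B$ may be taken disjoint and in verifying the dimension hypothesis of Proposition \ref{lisa}, which the paper simply asserts.
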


\begin{proof}

Let $\wp_1, \ldots , \wp_n$ be the minimal primes of
$\mathfrak{a}$ and set $c:=\c(R/\mathfrak{a})$.

If $n=1$, then $c=\dim R/\wp_1$. Let $\wp$ be a minimal prime of
$R$ such that $\wp \subseteq \wp_1$. Using the Independence
Theorem we have $\cd(R, \wp_1) \geq \cd(R/\wp, \wp_1/\wp) \geq
\height(\wp_1/\wp) $. Since $R/ \wp$ is catenary
\[ c = \dim R/\wp_1 = \dim R/\wp - \height(\wp_1/\wp) \geq \sdim R - \cd(R, \wp_1)= \sdim R - \cd(R , \mathfrak{a}). \]
If $n > 1$, let $(A,B) \in \mathcal{S}(n)$ be a pair such that
\[ c= \dim \left( \frac{R}{ (\cap_{i \in A}\wp_i) + (\cap_{j \in B} \wp_j)} \right). \]
Call $\mathcal{J}:= \cap_{i \in A} \wp_i$ and $\mathcal{K}:=
\cap_{j \in B} \wp_j$. Then $\dim R/\mathcal{J}>c$ and $\dim
R/\mathcal{K}>c$. Proposition \ref{lisa} implies
\[ c = \dim R/(\mathcal{J} + \mathcal{K}) \geq \min \{ \c(R) , \sdim R - 1 \} - \cd(R, \mathcal{J} \cap \mathcal{K}) \]
and since $\sqrt{\mathfrak{a}} = \mathcal{J} \cap \mathcal{K}$ the
theorem is proved.

\end{proof}

By Theorem \ref{bart} and the fact that $\ara(\mathfrak{a}) \geq
\cd(R,\mathfrak{a})$ we immediately obtain the Connectedness
Theorem \ref{palladineve}.

Moreover, from Theorem \ref{bart} follows also a theorem, proved
in \cite[Theorem 3.3]{ho-hu}, which generalizes a result of
Faltings given in \cite{faltings}. See also Schenzel
\cite[Corollary 5.10]{schenzel}.

\begin{cor}\label{frink}(Hochster-Huneke). Let $(R, \mathfrak{m})$ be a
complete equidimensional local ring of dimension $d$ such that
$H_{\mathfrak{m}}^d(R)$ is an indecomposable $R$-module. Then
$\c(R/\mathfrak{a}) \geq d - \cd(R,\mathfrak{a}) - 1$. In
particular, if $\cd(R, \mathfrak{a}) \leq d-2$ the punctured
spectrum $\Spec(R/\mathfrak{a}) \setminus \{ \mathfrak{m} \}$ of
$R/ \mathfrak{a}$  is connected.
\end{cor}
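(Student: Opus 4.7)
The plan is to derive the corollary from Theorem~\ref{bart}, reducing everything to the single estimate $\c(R) \geq d - 1$. Since $R$ is equidimensional of dimension $d$, every minimal prime $\wp$ of $R$ has $\dim R/\wp=d$, so $\sdim R=d$, and Theorem~\ref{bart} yields
\[ \c(R/\mathfrak{a}) \;\geq\; \min\{\c(R),\, d-1\} - \cd(R,\mathfrak{a}). \]
The sought inequality will therefore follow at once as soon as I have $\c(R)\geq d-1$.

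To prove $\c(R)\geq d-1$ I would argue by contradiction. Assume $\c(R)\leq d-2$ and let $\wp_1,\ldots,\wp_n$ be the minimal primes of $R$. By Lemma~\ref{milhouse}(a) there is a pair $(A,B)\in\mathcal{S}(n)$ such that the ideals $\mathcal{J}:=\cap_{i\in A}\wp_i$ and $\mathcal{K}:=\cap_{j\in B}\wp_j$ satisfy $\dim R/(\mathcal{J}+\mathcal{K})\leq d-2$. Since $A\cup B=\{1,\ldots,n\}$, the intersection $\mathcal{J}\cap\mathcal{K}$ has the same radical as the nilradical of $R$, so $H^i_{\mathcal{J}\cap\mathcal{K}}(R)=0$ for every $i>0$. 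Plugging these vanishings into the Mayer--Vietoris sequence~(1) in degree $d$ gives
\[ H^d_{\mathcal{J}+\mathcal{K}}(R)\;\cong\;H^d_{\mathcal{J}}(R)\oplus H^d_{\mathcal{K}}(R). \]

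The remaining work is to promote this into a nontrivial decomposition of $H^d_\mathfrak{m}(R)$, contradicting the indecomposability hypothesis. I plan two ingredients. First, the comparison $H^d_\mathfrak{m}(R)\cong H^d_{\mathcal{J}+\mathcal{K}}(R)$, which I would read off from the change-of-support long exact sequence attached to $\mathcal{J}+\mathcal{K}\subseteq\mathfrak{m}$ (or from the composition-of-functors spectral sequence $H^p_\mathfrak{m}(H^q_{\mathcal{J}+\mathcal{K}}(R))\Rightarrow H^{p+q}_\mathfrak{m}(R)$), using the inequality $\dim R/(\mathcal{J}+\mathcal{K})\leq d-2$ to force the relevant intermediate terms to vanish. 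Second, nonvanishing of both summands $H^d_{\mathcal{J}}(R)$ and $H^d_{\mathcal{K}}(R)$, which I would extract from equidimensionality combined with a generalized Hartshorne--Lichtenbaum criterion for top local cohomology. This second point is where I expect the principal technical obstacle: the dimension bound on $\mathcal{J}+\mathcal{K}$ need not directly yield the nonvanishing of both summands for the specific $(A,B)$ supplied by Lemma~\ref{milhouse}(a), and a careful choice or refinement of the partition, leaning again on equidimensionality, is required.

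Finally the ``in particular'' assertion is immediate: when $\cd(R,\mathfrak{a})\leq d-2$ the main inequality gives $\c(R/\mathfrak{a})\geq 1$, so no $0$-dimensional closed subset disconnects $\Spec(R/\mathfrak{a})$; removing the closed point $\mathfrak{m}$ therefore leaves the punctured spectrum connected.
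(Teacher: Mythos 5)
Your overall reduction is the same as the paper's: establish $\c(R)\geq d-1$ and feed it into Theorem~\ref{bart} (and your treatment of the ``in particular'' clause is fine). The paper, however, simply quotes \cite[Theorem 3.6]{ho-hu} for the estimate $\c(R)\geq d-1$; you instead try to reprove that implication of Hochster--Huneke from scratch, and it is exactly there that your sketch breaks down.

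The concrete problem is your first ``ingredient'', the claimed isomorphism $H^d_{\mathfrak m}(R)\cong H^d_{\mathcal J+\mathcal K}(R)$. The bound $\dim R/(\mathcal J+\mathcal K)\leq d-2$ does not force this: in the spectral sequence $H^p_{\mathfrak m}(H^q_{\mathcal J+\mathcal K}(R))\Rightarrow H^{p+q}_{\mathfrak m}(R)$ the terms with $1\leq p\leq d-2$ and $q=d-p\geq 2$ are not killed by any dimension count, and in fact the isomorphism is false. Take $R=k[[x,y,z,u,v]]/\bigl((x,y)\cap(u,v)\bigr)$, which is complete, equidimensional of dimension $d=3$, with minimal primes $\mathcal J=(\overline x,\overline y)$ and $\mathcal K=(\overline u,\overline v)$ and $\dim R/(\mathcal J+\mathcal K)=1=d-2$. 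The Hartshorne--Lichtenbaum theorem gives $H^3_{\mathcal J}(R)=H^3_{\mathcal K}(R)=0$ (since $\dim R/(\mathcal J+\wp)>0$ for both minimal primes $\wp$), hence by your Mayer--Vietoris step $H^3_{\mathcal J+\mathcal K}(R)=0$ as well, while $H^3_{\mathfrak m}(R)\neq 0$ by Grothendieck's non-vanishing theorem. So both of your ingredients fail at once: the comparison with $H^d_{\mathfrak m}(R)$ and the non-vanishing of the two summands (the obstacle you yourself flag is not a technicality but a genuine failure of the approach). The decomposition of $H^d_{\mathfrak m}(R)$ that Hochster--Huneke exploit is not $H^d_{\mathcal J}(R)\oplus H^d_{\mathcal K}(R)$ but rather $H^d_{\mathfrak m}(R/\mathcal J)\oplus H^d_{\mathfrak m}(R/\mathcal K)$, obtained from the exact sequence $0\to R/(\mathcal J\cap\mathcal K)\to R/\mathcal J\oplus R/\mathcal K\to R/(\mathcal J+\mathcal K)\to 0$ together with Grothendieck vanishing for $R/(\mathcal J+\mathcal K)$ in degrees $d-1$ and $d$, plus an equidimensionality argument to pass from $R$ to $R/\sqrt{0}$. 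Either carry out that argument or, as the paper does, cite \cite[Theorem 3.6]{ho-hu} directly.
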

\begin{proof}
\cite[Theorem 3.6]{ho-hu} implies that $\c(R) \geq d - 1$, so
the thesis is a consequence of Theorem \ref{bart}. For the last
statement we only have to observe that $\c(\Spec(R/ \mathfrak{a})
\setminus \{\mathfrak{m} \})= \c(R/ \mathfrak{a}) - 1$.
\end{proof}

\begin{os}\label{patty}
In \cite[Theorem 3.4]{D-N-T}  the authors claim  that Corollary
\ref{frink} holds  without the assumption  that
$H_{\mathfrak{m}}^d(R)$ is indecomposable. This is not correct;
indeed the
 converse of Corollary \ref{frink} is true.
That is, if $R$ is a complete equidimensional ring of dimension
$d$ and if $\c(R/\mathfrak{b}) \geq d - \cd(R,\mathfrak{b}) - 1$
holds for all ideals $\mathfrak{b} \subseteq R$, then taking
$\mathfrak{b}=0$ it follows that $R$ is connected in codimension 1.
This implies, by \cite[Theorem 3.6]{ho-hu},  that
$H_{\mathfrak{m}}^d(R)$ is indecomposable.

An explicit  counterexample to \cite[Theorem 3.4]{D-N-T} and to
\cite[Corollary 3.5]{D-N-T}, is given by
$R=k[[x,y,u,v]]/(xu,xv,yu,yv)$, $M=R$ and $\mathfrak{a}$ the zero
ideal. The minimal prime ideals of $R$ are
$(\overline{x},\overline{y})$ and $(\overline{u},\overline{v})$,
so $R$ is a complete equidimensional local ring of dimension $2$.
By part (a) of Lemma \ref{milhouse} we obtain $c(R)=0$, which is a
contradiction to the cited results.
\end{os}

\begin{os}\label{spiderpork}

If $(R,\mathfrak{m})$ is complete and local and $M$ is a finitely
generated $R$-module, we have
\[ \c(M/\mathfrak{a}M) \geq \min \{ \c(M), \sdim M - 1 \} - \cd(M , \mathfrak{a}), \]
by the following argument: we can consider the complete local ring $S:= R/ (0 :_R
M)$. Then we easily have
$\c(M/\mathfrak{a}M)=\c(S/\mathfrak{a}S)$, $\c(M) = \c(S)$ and
$\sdim M = \sdim S$. Moreover, by \cite[Theorem 2.2]{D-N-T}, or
\cite[Lemma 2.1]{schenzel}, we have
\[ \cd(M , \mathfrak{a})=\cd(S , \mathfrak{a})=\cd(S ,
\mathfrak{a}S)\] and thus the result follows from applying Theorem \ref{bart}
to $S$ and $\mathfrak{a}S$.

\end{os}

By Remark \ref{spiderpork} and the part (b) of Lemma
\ref{milhouse} we obtain the following corollary.

\begin{cor} \label{burns}

Let $(R,\mathfrak{m})$ be complete and local, and $M$ a finitely
generated $R$-module. Then
\[ \c(M/\mathfrak{a}M) \geq  \c(M) - \cd(M , \mathfrak{a}) - 1. \]
Moreover, if $M$ has more than one minimal prime ideal, the
inequality is strict.

\end{cor}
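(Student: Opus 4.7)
The plan is to deduce the corollary by a direct case analysis on the quantity $\min\{\c(M),\sdim M-1\}$ appearing in Remark \ref{spiderpork}, using Lemma \ref{milhouse}(b) to relate $\c(M)$ and $\sdim M$.

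First I would invoke Remark \ref{spiderpork} to obtain
\[ \c(M/\mathfrak{a}M) \geq \min\{\c(M), \sdim M - 1\} - \cd(M, \mathfrak{a}). \]
Lemma \ref{milhouse}(b) says $\c(M) \leq \sdim M$, with equality iff $\Supp(M)$ is irreducible, i.e.\ iff $M$ has a unique minimal prime ideal.

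If $M$ has a unique minimal prime, then $\c(M) = \sdim M$, so $\min\{\c(M), \sdim M - 1\} = \sdim M - 1 = \c(M) - 1$, and Remark \ref{spiderpork} gives exactly
\[ \c(M/\mathfrak{a}M) \geq \c(M) - \cd(M, \mathfrak{a}) - 1, \]
which is the stated inequality (possibly with equality).

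If instead $M$ has more than one minimal prime, then $\c(M) < \sdim M$; since both are integers this means $\c(M) \leq \sdim M - 1$, so $\min\{\c(M), \sdim M - 1\} = \c(M)$. Substituting into Remark \ref{spiderpork} yields
\[ \c(M/\mathfrak{a}M) \geq \c(M) - \cd(M, \mathfrak{a}) > \c(M) - \cd(M, \mathfrak{a}) - 1, \]
which is the strict inequality. There is no real obstacle here: once Remark \ref{spiderpork} and Lemma \ref{milhouse}(b) are in hand, the corollary is just the observation that the $-1$ correction in the minimum is only needed in the irreducible case.
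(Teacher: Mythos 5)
Your proof is correct and is exactly the paper's argument: the paper simply states that the corollary follows from Remark \ref{spiderpork} together with Lemma \ref{milhouse}(b), and your case analysis on whether $\c(M)=\sdim M$ or $\c(M)\leq \sdim M-1$ spells out that derivation faithfully, including the strictness when $M$ has more than one minimal prime.
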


\subsection{Non complete case}
\label{subchap1.2}

Up to now, we have obtained a certain understanding of the
connectivity in the spectrum of a noetherian complete local ring.
In order to apply this knowledge to the graded case, we need
two lemmas. We also obtain results on the connectedness for other noetherian local rings (Corollary \ref{willy} and Proposition \ref{serre}).
The reader can find the proof of the first lemma in \cite[Lemma
19.3.1]{B-S}.

\begin{lem}\label{boe}

Assume that $(R,\mathfrak{m})$ is local. We denote with
$\widehat{R}$ the completion of $R$ with respect to
$\mathfrak{m}$. The following hold:
\begin{itemize}
\item[(i)] $\c(R) \geq \c(\widehat{R})$; \item[(ii)] if $\wp
\widehat{R} \in \Spec(\widehat{R})$ for all minimal prime ideals
$\wp$ of $R$, then equality holds in (i);
\end{itemize}

\end{lem}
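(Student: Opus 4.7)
The plan is to combine the chain-characterization of connectivity dimension recorded in Remark \ref{hilbert}(2) with the standard behaviour of the faithfully flat extension $R \to \widehat{R}$. The three ingredients I will use without further comment are: (a) every minimal prime $\widehat{\mathfrak{q}}$ of $\widehat{R}$ contracts to a minimal prime $\widehat{\mathfrak{q}} \cap R$ of $R$; (b) every minimal prime $\wp$ of $R$ is contracted from at least one minimal prime of $\widehat{R}$, by faithful flatness; (c) for every ideal $\mathfrak{a} \subseteq R$ one has $\dim R/\mathfrak{a} = \dim \widehat{R}/\mathfrak{a}\widehat{R}$. These allow me to convert chains of irreducible components of $\Spec\widehat{R}$ into chains of irreducible components of $\Spec R$ and vice versa, under suitable assumptions.

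For part (i), set $d := \c(\widehat{R})$ and fix two minimal primes $\wp', \wp''$ of $R$. Using (b) I choose minimal primes $\widehat{\mathfrak{q}}', \widehat{\mathfrak{q}}''$ of $\widehat{R}$ lying over $\wp', \wp''$ and apply Remark \ref{hilbert}(2) to $\widehat{R}$ to produce a chain $\widehat{\mathfrak{q}}' = \widehat{\mathfrak{q}}_{j_0}, \widehat{\mathfrak{q}}_{j_1}, \ldots, \widehat{\mathfrak{q}}_{j_k} = \widehat{\mathfrak{q}}''$ of minimal primes of $\widehat{R}$ with $\dim \widehat{R}/(\widehat{\mathfrak{q}}_{j_{i-1}} + \widehat{\mathfrak{q}}_{j_i}) \geq d$. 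Contracting to $R$ and writing $\wp_{j_i} := \widehat{\mathfrak{q}}_{j_i} \cap R$, the inclusion $\wp_{j_i}\widehat{R} \subseteq \widehat{\mathfrak{q}}_{j_i}$ combined with (c) yields the crucial estimate
\[
\dim R/(\wp_{j_{i-1}} + \wp_{j_i}) = \dim \widehat{R}/(\wp_{j_{i-1}} + \wp_{j_i})\widehat{R} \geq \dim \widehat{R}/(\widehat{\mathfrak{q}}_{j_{i-1}} + \widehat{\mathfrak{q}}_{j_i}) \geq d.
\]
Deleting repetitions leaves a chain of minimal primes of $R$ linking $\wp'$ and $\wp''$ with consecutive intersection dimensions $\geq d$, so Remark \ref{hilbert}(2) gives $\c(R) \geq d$.

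For part (ii), the hypothesis that $\wp\widehat{R}$ is prime for each minimal prime $\wp$ of $R$ upgrades the contraction/extension correspondence to a bijection $\wp \leftrightarrow \wp\widehat{R}$ between minimal primes of $R$ and of $\widehat{R}$. I then reverse the argument of (i): a chain in $R$ connecting two minimal primes $\wp'$ and $\wp''$ with consecutive quotient dimensions $\geq \c(R)$ transports to the chain of the $\wp_{j_i}\widehat{R}$ in $\widehat{R}$, and the estimate above collapses to an \emph{equality}
\[
\dim \widehat{R}/(\wp_{j_{i-1}}\widehat{R} + \wp_{j_i}\widehat{R}) = \dim \widehat{R}/(\wp_{j_{i-1}} + \wp_{j_i})\widehat{R} = \dim R/(\wp_{j_{i-1}} + \wp_{j_i}),
\]
giving $\c(\widehat{R}) \geq \c(R)$ and hence equality with (i). The main obstacle, and exactly the reason equality fails in general, is the possibility of a strict inclusion $\wp\widehat{R} \subsetneq \widehat{\mathfrak{q}}$ when a minimal prime $\wp$ of $R$ splits into several minimal primes of $\widehat{R}$: this slack is what makes the estimate in (i) non-reversible, and the hypothesis of (ii) is tailored precisely to eliminate it.
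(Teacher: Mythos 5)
Your argument is correct. The paper does not reprove this lemma but simply cites \cite[Lemma 19.3.1]{B-S}, and your proof is in substance the standard one found there: both rest on the facts that minimal primes of $\widehat{R}$ contract to minimal primes of $R$, that every minimal prime of $R$ is so obtained (faithful flatness), and that $\dim R/\mathfrak{a}=\dim\widehat{R}/\mathfrak{a}\widehat{R}$; you organize the transfer via the chain characterization of Remark \ref{hilbert}(2), whereas the cited proof uses the equivalent partition characterization of Lemma \ref{milhouse}(a), a purely cosmetic difference.
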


The reason why we cannot extend Theorem \ref{bart} to non complete local rings is that the inequality (i) in Lemma \ref{boe} may be strict. However for certain rings the above inequality is actually an equality (Corollary \ref{willy} and Theorem \ref{winchester}), and for other rings this problem can be avoided (Proposition \ref{serre})

\begin{cor}\label{willy}

Let $(R, \mathfrak{m})$ be an $r$-dimensional local analytically
irreducible ring (i.e. $\widehat{R}$ is irreducible). Then
\[ \c(R/\mathfrak{a}) \geq  r - \cd(R , \mathfrak{a}) - 1. \]

\end{cor}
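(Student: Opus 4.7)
The plan is to reduce to Theorem \ref{bart} by passing to the completion $\widehat{R}$ and then descend back to $R$ using Lemma \ref{boe}.

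First I would observe that $\widehat{R}$ is a complete local ring of dimension $r$ (completion preserves dimension for noetherian local rings). By assumption $\widehat{R}$ is irreducible, so by part (b) of Lemma \ref{milhouse} we get $\c(\widehat{R}) = \sdim \widehat{R} = r$, and consequently $\min\{\c(\widehat{R}), \sdim \widehat{R} - 1\} = r - 1$. Applying Theorem \ref{bart} to $\widehat{R}$ and the ideal $\mathfrak{a}\widehat{R}$ then yields
\[ \c(\widehat{R}/\mathfrak{a}\widehat{R}) \geq r - 1 - \cd(\widehat{R}, \mathfrak{a}\widehat{R}). \]

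Next I would relate the three invariants appearing here to the corresponding invariants over $R$. For the cohomological dimension, faithful flatness of $R \to \widehat{R}$ together with the flat base-change isomorphism $H^i_{\mathfrak{a}}(R) \otimes_R \widehat{R} \cong H^i_{\mathfrak{a}\widehat{R}}(\widehat{R})$ shows that $H^i_{\mathfrak{a}\widehat{R}}(\widehat{R}) \neq 0$ if and only if $H^i_{\mathfrak{a}}(R) \neq 0$, so $\cd(\widehat{R}, \mathfrak{a}\widehat{R}) = \cd(R, \mathfrak{a})$. For the connectivity dimension, the completion of $R/\mathfrak{a}$ with respect to its maximal ideal is exactly $\widehat{R}/\mathfrak{a}\widehat{R}$, so part (i) of Lemma \ref{boe} applied to $R/\mathfrak{a}$ gives
\[ \c(R/\mathfrak{a}) \geq \c(\widehat{R}/\mathfrak{a}\widehat{R}). \]
Chaining these inequalities produces the desired bound $\c(R/\mathfrak{a}) \geq r - \cd(R, \mathfrak{a}) - 1$.

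I do not expect any real obstacle; the statement is essentially a formal consequence of Theorem \ref{bart} once the analytic irreducibility hypothesis is used to compute $\c(\widehat{R})$ and $\sdim \widehat{R}$ via Lemma \ref{milhouse}(b). The only mildly delicate point is making sure that $\cd$ is preserved under completion, which is standard from faithful flatness but should be invoked explicitly.
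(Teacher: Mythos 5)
Your proposal is correct and follows essentially the same route as the paper: pass to $\widehat{R}$, use analytic irreducibility to compute $\c(\widehat{R})=\sdim\widehat{R}=r$, equate $\cd(R,\mathfrak{a})$ with $\cd(\widehat{R},\mathfrak{a}\widehat{R})$ via flat base change and faithful flatness, and descend with Lemma \ref{boe}(i). The only cosmetic difference is that the paper invokes Corollary \ref{burns} (itself a repackaging of Theorem \ref{bart}) rather than Theorem \ref{bart} directly.
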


\begin{proof}

By point (i) of Lemma \ref{boe}, we have $\c(R/\mathfrak{a}) \geq
\c(\widehat{R}/\mathfrak{a}\widehat{R})$; moreover, by the
Flat Base Change Theorem (see for example \cite[Theorem
4.3.2]{B-S}), $H_{\mathfrak{a}
\widehat{R}}^i(\widehat{R}) \cong H_\mathfrak{a}^i(R) \otimes_{R}
\widehat{R}$ for all $i \in \mathbb{N}$ and since the natural
homomorphism $R \longrightarrow \widehat{R}$ is faithfully flat,
then $\cd(R, \mathfrak{a})= \cd(\widehat{R},
\mathfrak{a}\widehat{R})$. Also, hypotheses imply
$\c(\widehat{R})= \dim(\widehat{R})$, and it is well known that
$\dim(R)= \dim(\widehat{R})$. Hence we conclude  using Corollary
\ref{burns}.

\end{proof}

\begin{prop}\label{serre}
Let $R$ be a $r$-dimensional local ring which is a quotient of a Cohen-Macaulay local ring, and assume that $R$ satisfies Serre's condition $S_2$. Then
\[ \c(R/\mathfrak{a}) \geq  r - \cd(R , \mathfrak{a}) - 1. \]
In particular if $R$ is a Cohen-Macaulay local ring then $\c(R/\mathfrak{a}) \geq  r - \cd(R , \mathfrak{a}) - 1$.
\end{prop}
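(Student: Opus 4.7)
The plan is to reduce everything to the complete case and apply Theorem \ref{bart} to $\widehat{R}$. By Lemma \ref{boe}(i) one has $\c(R/\mathfrak{a}) \geq \c(\widehat{R}/\mathfrak{a}\widehat{R})$, and by the Flat Base Change Theorem $\cd(R,\mathfrak{a}) = \cd(\widehat{R}, \mathfrak{a}\widehat{R})$ (using also that $\dim R = \dim \widehat{R} = r$). Applying Theorem \ref{bart} to $\widehat{R}$, the desired inequality
$$\c(R/\mathfrak{a}) \geq r - \cd(R,\mathfrak{a}) - 1$$
will follow once I establish that $\min\{\c(\widehat{R}), \sdim \widehat{R} - 1\} \geq r - 1$, i.e.\ that $\widehat{R}$ is equidimensional of dimension $r$ and connected in codimension one.

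Writing $R = A/I$ with $A$ a Cohen--Macaulay local ring, one gets $\widehat{R} = \widehat{A}/I\widehat{A}$, where $\widehat{A}$ is complete Cohen--Macaulay, so $\widehat{R}$ is again a quotient of a Cohen--Macaulay local ring. The $S_2$ assumption on $R$, combined with the fact that $A$ is Cohen--Macaulay, forces $R$ to be equidimensional and $I$ to be unmixed in $A$ (all associated primes of $I$ are minimal and of the same height). Since $A \to \widehat{A}$ is faithfully flat and $\widehat{A}$ is equidimensional, a standard height computation shows that every minimal prime of $I\widehat{A}$ has height equal to $\height(I)$, so $I\widehat{A}$ is unmixed in $\widehat{A}$. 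Hence $\widehat{R}$ is equidimensional of dimension $r$, so $\sdim \widehat{R} = r$.

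For $\c(\widehat{R}) \geq r - 1$, the key ingredient is a Hartshorne-type connectedness statement for rings satisfying $S_2$ that are quotients of Cohen--Macaulay local rings (essentially Proposition \ref{barney}, mentioned in the introduction as a generalization of Hartshorne's result). Applied to $\widehat{R}$, after checking that the $S_2$ condition descends from $R$ to $\widehat{R}$ under the quotient-of-Cohen--Macaulay hypothesis, this yields that $\widehat{R}$ is connected in codimension one. In the particular case when $R$ itself is Cohen--Macaulay, this step is immediate: $\widehat{R}$ is then Cohen--Macaulay, hence equidimensional and connected in codimension one by the classical theorem of Hartshorne, so no subtleties about formal fibers arise.

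The main obstacle is precisely the transfer of the $S_2$ condition (or, equivalently for our purposes, of connectedness in codimension one) from $R$ to $\widehat{R}$: whereas equidimensionality can be checked by a direct height argument using flatness, the ability to use $S_2$ on $\widehat{R}$ relies on the formal fibres of $R$ being well enough behaved, which is where the ``quotient of Cohen--Macaulay'' hypothesis is crucial. Once this step is in place, combining Theorem \ref{bart} for $\widehat{R}$ with Lemma \ref{boe}(i) gives the asserted bound.
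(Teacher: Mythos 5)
Your argument is correct and follows essentially the same route as the paper: pass to the completion, use the quotient-of-Cohen--Macaulay hypothesis to transfer the $S_2$ condition to $\widehat{R}$, apply Proposition \ref{barney} to get that $\widehat{R}$ is connected in codimension one, and then conclude via Lemma \ref{boe}(i), flat base change and Theorem \ref{bart}, exactly as in Corollary \ref{willy}. Your explicit verification that $\widehat{R}$ is equidimensional (so that $\sdim\widehat{R}=r$ in Theorem \ref{bart}) is a point the paper leaves implicit, and it is a welcome addition.
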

\begin{proof}
The completion of $R$, $\widehat{R}$, satisfies $S_2$ as well as $R$ (see \cite[Exercise 23.2]{matsu}). Then $\widehat{R}$ is connected in codimension 1 by Proposition \ref{barney}, so, arguing as in the proof of Corollary \ref{willy}, we conclude.
\end{proof}

In the following we say that $R$ is a $R_0$-algebra finitely generated positively graded on $\mathbb{Z}$ if $R=R_0[\xi_1, \ldots ,\xi_r]$ with $\deg(\xi_j)$ a positive integer.
\begin{lem}\label{krusty}

Let $R_0$ be a noetherian ring, and $R$ a $R_0$-algebra finitely
generated positively graded on $\mathbb{Z}$. Let $\mathfrak{m} =
R_+$ denote the irrelevant ideal of $R$, and
$\widehat{R^{\mathfrak{m}}}$ the $\mathfrak{m}$-adic completion of
$R$. If $\wp$ is a graded prime of $R$, then $\wp
\widehat{R^{\mathfrak{m}}}$ is a prime ideal of $\widehat{R^{\mathfrak{m}}}$.

\end{lem}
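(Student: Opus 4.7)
The plan is to reduce, by passage to the quotient $R/\wp$, to showing that the $\mathfrak{m}$-adic completion of a positively graded noetherian domain is itself a domain, and then to conclude by a leading-term argument inside an explicit model of the completion.

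First I would make $\widehat{R^{\mathfrak{m}}}$ concrete. Because each generator $\xi_j$ has positive degree, the power filtration $\{\mathfrak{m}^n\}$ on $R$ is cofinal with the grading filtration $\{R_{\geq n}\}$, where $R_{\geq n} := \bigoplus_{i \geq n} R_i$: the inclusion $\mathfrak{m}^n \subseteq R_{\geq n}$ is immediate, and conversely a monomial in the $\xi_j$ of total degree at least $nD$, with $D := \max_j \deg \xi_j$, must involve at least $n$ factors, hence lies in $\mathfrak{m}^n$. Passing to limits, this cofinality identifies $\widehat{R^{\mathfrak{m}}}$ with the ring of formal graded series $\prod_{i \geq 0} R_i$ under convolution of homogeneous components; in particular every element of the completion has a unique expansion $\sum_{i \geq 0} a_i$ with $a_i \in R_i$, and $R_0$ embeds (since $R_0 \cap \mathfrak{m}^n = 0$ for $n \geq 1$).

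Next, since $R$ is noetherian and $\wp$ is finitely generated, $\mathfrak{m}$-adic completion is exact on the sequence $0 \to \wp \to R \to R/\wp \to 0$, giving an isomorphism of rings $\widehat{R^{\mathfrak{m}}}/\wp \widehat{R^{\mathfrak{m}}} \cong \widehat{(R/\wp)^{\mathfrak{m}(R/\wp)}}$ (the two natural $\mathfrak{m}$-adic topologies on $R/\wp$ coincide, since $\mathfrak{m}^n(R/\wp) = (\mathfrak{m}^n + \wp)/\wp$). The quotient $R/\wp$ is again a positively graded noetherian $R_0/\wp_0$-algebra of the type in the lemma, and it is a domain because $\wp$ is graded and prime. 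Applying the first step to $R/\wp$ reduces the problem to showing that for a graded noetherian domain $S$ of this type, $\prod_{i \geq 0} S_i$ is a domain under convolution. Given nonzero series $f = \sum_i f_i$ and $g = \sum_j g_j$, let $a, b$ be the least indices with $f_a, g_b \neq 0$; then the degree-$(a+b)$ component of $fg$ equals $f_a g_b$, all other contributions vanishing by minimality, and this is nonzero because $S$ is a domain. Properness of $\wp\widehat{R^{\mathfrak{m}}}$ follows from $1 \in R_0 \setminus \wp_0$.

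The main obstacle is setting up the explicit description of $\widehat{R^{\mathfrak{m}}}$ in the first step carefully enough to justify the convolution product on $\prod_{i \geq 0} R_i$ and to see that extension of $\wp$ corresponds to the subseries with $a_i \in \wp_i$; once this is in place, both the reduction via the quotient and the concluding leading-degree argument are essentially the classical proof that $A[[x]]$ is a domain whenever $A$ is.
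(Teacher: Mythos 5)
Your proposal is correct and follows essentially the same route as the paper: both reduce modulo $\wp$ via the isomorphism $\widehat{R^{\mathfrak{m}}}/\wp\widehat{R^{\mathfrak{m}}} \cong \widehat{(R/\wp)^{\mathfrak{m}}}$ and then invoke the fact that the $\mathfrak{m}$-adic completion of a positively graded noetherian domain at its irrelevant ideal is again a domain. The only difference is that the paper simply cites this last fact (Huneke--Taylor, Lemma 7.5), whereas you prove it from scratch by identifying the completion with $\prod_{i \geq 0} S_i$ under convolution (using cofinality of $\{\mathfrak{m}^n\}$ with $\{S_{\geq n}\}$) and running the classical lowest-nonzero-component argument; that supplementary argument is sound.
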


\begin{proof}


We have only to note that,
if $\wp$ is a graded prime of $R$, then $R/\wp$ is a noetherian
domain positively graded; so, since
$\widehat{R^{\mathfrak{m}}}/\wp\widehat{R^{\mathfrak{m}}} \cong
\widehat{(R/\wp)^{\mathfrak{m}}}$, \cite[Lemma 7.5]{hu-ta} let us conclude.

\end{proof}

Now we prove a version of Theorem \ref{bart} in the case when $R$
is a graded $k$-algebra.

\begin{thm}\label{winchester}

Let $k$ be a field, and let $R$ be a $k$-algebra finitely
generated positively graded on $\mathbb{Z}$; then, if
$\mathfrak{a}$ is graded,

\[ \c(R/\mathfrak{a}) \geq \c(R) - \cd(R, \mathfrak{a}) - 1. \]
Moreover, if $R$ has more than one minimal prime ideal, the
inequality is strict.
\end{thm}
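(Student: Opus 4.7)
The plan is to reduce to the complete local setting by passing to the $\mathfrak{m}$-adic completion $S := \widehat{R^{\mathfrak{m}}}$ at the irrelevant ideal $\mathfrak{m} = R_+$, and then invoking Corollary \ref{burns} on $S$. The crucial observation that enables the reduction is that every minimal prime of $R$ is graded (being an associated prime of the graded module $R$), and, since $\mathfrak{a}$ is homogeneous, every minimal prime of $\mathfrak{a}$ in $R$ is also graded.

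With this observation in hand, I would verify three identifications. First, by Lemma \ref{krusty}, every graded prime of $R$ extends to a prime of $S$, so Lemma \ref{boe}(ii) yields $\c(R) = \c(S)$. Second, applying the same argument to the quotient $R/\mathfrak{a}$, which is itself a finitely generated positively graded $k$-algebra whose completion at its own irrelevant ideal is canonically $S/\mathfrak{a}S$, I would obtain $\c(R/\mathfrak{a}) = \c(S/\mathfrak{a}S)$. Third, the Flat Base Change Theorem along the faithfully flat map $R \to S$ gives $\cd(R, \mathfrak{a}) = \cd(S, \mathfrak{a}S)$.

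Plugging these identifications into Corollary \ref{burns}, applied to the complete local ring $S$ with module $S$ and ideal $\mathfrak{a}S$, yields exactly the desired inequality
\[ \c(R/\mathfrak{a}) = \c(S/\mathfrak{a}S) \geq \c(S) - \cd(S, \mathfrak{a}S) - 1 = \c(R) - \cd(R, \mathfrak{a}) - 1. \]
For the strict inequality when $R$ has more than one minimal prime, I would observe that going-down (a consequence of faithful flatness) forces every minimal prime of $R$ to be the contraction of some minimal prime of $S$, so $S$ inherits the property of having more than one minimal prime, and the strict form of Corollary \ref{burns} applies. The main technical obstacle is precisely the preservation of the connectivity dimension under completion: in general only the inequality $\c(R) \geq \c(\widehat{R})$ of Lemma \ref{boe}(i) is available, and it can be strict in the non-graded case. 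The graded hypothesis is essential here, and it enters the argument exactly through Lemma \ref{krusty}, which upgrades (i) to equality via (ii).
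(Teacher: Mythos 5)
Your proposal is correct and follows essentially the same route as the paper: localize/complete at the irrelevant ideal, use Lemma \ref{krusty} to upgrade Lemma \ref{boe}(i) to the equality (ii) via the gradedness of the minimal primes, compare cohomological dimensions by flat base change, and invoke Corollary \ref{burns}. One small caveat: when $R$ is not local the map $R \to S=\widehat{R^{\mathfrak{m}}}$ is flat but \emph{not} faithfully flat (e.g.\ $k[x]\to k[[x]]$ kills the maximal ideal $(x-1)$), so your asserted equality $\cd(R,\mathfrak{a})=\cd(S,\mathfrak{a}S)$ does not follow from the reason you give; this is harmless because only the inequality $\cd(S,\mathfrak{a}S)\le \cd(R,\mathfrak{a})$, which plain flatness provides, is needed --- the paper handles this by factoring through $R_{\mathfrak{m}}$, using faithful flatness only for the local map $R_{\mathfrak{m}}\to\widehat{R_{\mathfrak{m}}}$.
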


\begin{proof}
Let $\mathfrak{m}$ be the irrelevant ideal of $R$. Using part
(i) of Lemma \ref{boe} we have
\begin{gather} \c(R_{\mathfrak{m}}/
\mathfrak{a} R_{\mathfrak{m}}) \geq
\c(\widehat{R_{\mathfrak{m}}/\mathfrak{a} R_{\mathfrak{m}}}) =
\c(\widehat{R_{\mathfrak{m}}}/\mathfrak{a}
\widehat{R_{\mathfrak{m}}}).
\end{gather}
Moreover, from Lemma \ref{krusty} follows that for all $\wp \in
\Spec(R_{\mathfrak{m}})$ minimal prime of $R_{\mathfrak{m}}$, \
$\wp \widehat{R_{\mathfrak{m}}} \in
\Spec(\widehat{R_{\mathfrak{m}}})$ (since a minimal prime of $R$
is graded, as the reader can see in the book of Bruns and Herzog \cite[Lemma 1.5.6 (b)
(ii)]{B-H}), then we can use part (ii) of Lemma \ref{boe} to
assert
\begin{gather}
\c(\widehat{R_{\mathfrak{m}}}) = \c(R_{\mathfrak{m}}).
\end{gather}
Besides, as in the proof of Corollary \ref{willy}, we have
\begin{gather}
\cd(R_{\mathfrak{m}}, \mathfrak{a} R_{\mathfrak{m}}) =
\cd(\widehat{R_{\mathfrak{m}}}, \mathfrak{a}
\widehat{R_{\mathfrak{m}}}).
\end{gather}
From (3), (4) and (5) and Corollary \ref{burns} follows that
\begin{gather}
\c(R_{\mathfrak{m}}/ \mathfrak{a} R_{\mathfrak{m}}) \geq
\c(R_{\mathfrak{m}}) - \cd(R_{\mathfrak{m}} , \mathfrak{a}
R_{\mathfrak{m}}) - 1.
\end{gather}
All minimal primes of $R$ and $\mathfrak{a}$ are graded, so they
are contained in $\mathfrak{m}$, hence by point (a) of Lemma
\ref{milhouse} $\c(R)= \c(R_{\mathfrak{m}})$ and
$\c(R/\mathfrak{a})= \c(R_{\mathfrak{m}}/\mathfrak{a}
R_{\mathfrak{m}})$. Therefore, since, by the Flat Base Change
Theorem, $\cd(R_{\mathfrak{m}}, \mathfrak{a} R_{\mathfrak{m}})
\leq \cd(R, \mathfrak{a})$, we have
\begin{gather} \c(R/\mathfrak{a}) \geq \c(R) -
\cd(R, \mathfrak{a}) - 1.
\end{gather}
Moreover, if $R$ has more than one minimal prime ideal, also
$R_{\mathfrak{m}}$ and $\widehat{R_{\mathfrak{m}}}$ are such, so
the inequality in (6), and hence that in (7), is strict.
\end{proof}

\begin{os}\label{nelson}

Proceeding in a similar way as in Remark \ref{spiderpork} we can
deduce from Theorem \ref{winchester} the following more general
fact.

Let $k$ be a field, $R$ a $k$-algebra finitely generated
positively graded on $\mathbb{Z}$ and $M$ a $\mathbb{Z}$-graded
finitely generated $R$-module; then, if $\mathfrak{a}$ is graded,
\[ \c(M/\mathfrak{a}M) \geq \c(M) - \cd(M, \mathfrak{a}) - 1. \]
Moreover, if $M$ has more than one minimal prime ideal, the
inequality is strict.

To prove this we only have to note that $0:_R M \subseteq R$ is a
graded ideal (\cite[Lemma 1.5.6]{B-H}).

\end{os}

Remark \ref{nelson} implies easily the following corollary.

\begin{cor}\label{marge}

Let $k$ be a field, $R$ a $k$-algebra finitely generated
positively graded and $M$ a $\mathbb{Z}$-graded finitely generated
$R$-module; then, if $\mathfrak{a}$ is graded,

\[ \c(\Proj(R) \cap \Supp(M/\mathfrak{a}M)) \geq \c(\Proj(R) \cap \Supp(M)) - \cd(M, \mathfrak{a}) - 1. \]
Moreover, if $M$ has more than one minimal prime ideal, the
inequality is strict.
\end{cor}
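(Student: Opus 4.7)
The plan is to deduce the corollary directly from Remark \ref{nelson} by combining it with a systematic dimension shift between affine connectivity $\c(N)$ and projective connectivity $\c(\Proj(R) \cap \Supp(N))$ for a graded finitely generated $R$-module $N$.

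First, I would apply Remark \ref{nelson} to $M$ with the graded ideal $\mathfrak{a}$, obtaining
\[
\c(M/\mathfrak{a}M) \geq \c(M) - \cd(M,\mathfrak{a}) - 1,
\]
strict whenever $M$ has more than one minimal prime.

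The heart of the argument is then to establish the formula
\[
\c(\Proj(R) \cap \Supp(N)) = \c(N) - 1
\]
for every graded finitely generated $R$-module $N$ whose support is not concentrated at $\mathfrak{m}=R_+$; the degenerate case produces empty sets on both sides and is handled via the convention $\dim \emptyset = -1$. By \cite[Lemma 1.5.6]{B-H} the minimal primes $\wp_1,\ldots,\wp_r$ of $N$ are all homogeneous, hence distinct from $\mathfrak{m}$ under the standing assumption, so the $V_+(\wp_i)$ are precisely the irreducible components of $\Proj(R) \cap \Supp(N)$. Each $R/\wp_i$ being a positively graded domain gives $\dim V_+(\wp_i) = \dim R/\wp_i - 1$, and the same one-step shift holds for pairwise intersections, since $V_+(\wp_i) \cap V_+(\wp_j) = V_+(\wp_i + \wp_j)$ and $R/(\wp_i + \wp_j)$ is again positively graded. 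Applying Lemma \ref{milhouse}(a) in both $\Supp(N)$ and $\Proj(R) \cap \Supp(N)$, the uniform shift of $-1$ in every term of the resulting minimum yields the displayed formula.

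Plugging this formula into the affine inequality for $N = M$ and $N = M/\mathfrak{a}M$ gives the desired estimate
\[
\c(\Proj(R) \cap \Supp(M/\mathfrak{a}M)) \geq \c(\Proj(R) \cap \Supp(M)) - \cd(M,\mathfrak{a}) - 1,
\]
with strictness transferring without change from Remark \ref{nelson}. The main obstacle I anticipate is the dimension shift for pairwise intersections: one has to check the boundary cases where $\sqrt{\wp_i + \wp_j} = \mathfrak{m}$, so that $V_+(\wp_i + \wp_j)$ is empty, and verify that the convention $\dim \emptyset = -1 = \dim(R/\mathfrak{m}) - 1$ keeps the formula consistent across all terms of the minimum computing $\c$.
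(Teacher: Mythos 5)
Your proposal is correct and matches the paper's intent exactly: the paper itself offers no more proof than the assertion that Remark \ref{nelson} ``implies easily'' the corollary, and your argument supplies precisely the missing routine step, namely the uniform shift $\c(\Proj(R)\cap\Supp(N))=\c(N)-1$ obtained by comparing the decompositions of Lemma \ref{milhouse}(a) term by term via $\dim V_+(\mathfrak{b})=\dim R/\mathfrak{b}-1$ for graded $\mathfrak{b}\subseteq R_+$. Your attention to the degenerate cases (support concentrated at the irrelevant ideal, empty intersections) is exactly the right bookkeeping for the convention $\dim\emptyset=-1$.
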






\subsection{Cohomological dimension of open subschemes of projective schemes}
\label{subchap1.3}

In this Subsection we give a geometric interpretation of the results
obtained in the Subsection \ref{subchap1.2}.

Given a projective scheme $X$ over a field $k$  and an open subscheme $U$, our purpose is to
find necessary conditions for which the cohomological dimension of
$U$ is less than a given integer.

We recall that the cohomological dimension of a noetherian scheme
$X$, written $\cd(X)$, is the smallest integer $r \geq 0$ such
that:
\[ H^i(X,\mathcal{F})=0 \]
for all $i > r$ and for all quasi-coherent sheaves $\mathcal{F}$
on $X$ (the reader can see \cite{hartshorne3} for several results
about the cohomological dimension of algebraic varieties).

By a well known result of Serre, there is a characterization of
noetherian affine schemes in terms of the cohomological dimension:
a noetherian scheme $X$ is affine if and only if $\cd(X)=0$ (see
Hartshorne \cite[Theorem 3.7]{hartshorne1}). Hence, as a
particular case, in this Subsection we  give necessary conditions
for the affineness of an open subscheme of a projective scheme
over $k$. This is an interesting theme in algebraic geometry, and
it was studied from several mathematicians (see for example
Goodman \cite{goodman}, Hartshorne \cite{hartshorne4} or Brenner
\cite{brenner}).

For example, it is well known that, if $X$ is a noetherian
separated scheme, $U \subseteq X$ an affine open subscheme and $Z
= X \setminus U$, then every irreducible component of $Z$ has
codimension less or equal to 1 (see \cite[Proposition
2.4]{brenner} or, for the particular case in which $X$ is a
complete  scheme, \cite[Chapter II, Proposition
3.1]{hartshorne4}).

In light of this result it is natural to ask: what can we say
about the codimension of the intersection of the various
components of $Z$? To answer  this question we  study, considering
a projective scheme $X$ over a field $k$, the connectivity
dimension of $Z$.

Our discussion is based on a well known result, which relates
the cohomology functors of the global sections with the local
cohomology functors.
This result is known as the
Serre-Grothendieck correspondence: let $X$ be a projective
scheme over a field $k$. In this case, $X= \Proj(R)$ where $R$ is
a graded finitely generated $k$-algebra. Let $Z=
\mathcal{V}_+(\mathfrak{a})$ (where $\mathfrak{a}$ is a graded
ideal of $R$), $U = X \setminus Z$, $M$ a graded $R$-module and
$\mathcal{F}= \widetilde{M}$ the associated quasi-coherent sheaf
on $X$. Then there are the isomorphisms

\begin{gather}
\bigoplus_{m \in \mathbb{Z}} H^i(U, \mathcal{F}(m)) \cong
H_{\mathfrak{a}}^{i + 1}(M) \mbox{ for all } i>0.
\end{gather}
The reader can find this result in \cite[Theorem 20.3.15 and
Remarks 20.2.16(iv)]{B-S}.

The following is the main result of this subsection.

\begin{thm}\label{kaprapal}

Let $X$ be a projective scheme over a field $k$, $U \subseteq X$
an open subscheme and $Z=X \setminus U$. If \ $\cd(U) \leq r$,
then $\c(Z) \geq \c(X)-r-2$, where the inequality is strict if $X$
is reducible.

\end{thm}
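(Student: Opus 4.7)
The plan is to translate the hypothesis on $\cd(U)$ into a bound on the algebraic cohomological dimension $\cd(R,\mathfrak{a})$ via the Serre--Grothendieck correspondence, and then apply Corollary \ref{marge} to deduce the desired bound on $\c(Z)$.

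More concretely, since $X$ is a projective scheme over $k$, we may write $X = \Proj(R)$ for some positively graded finitely generated $k$-algebra $R$, and since $Z$ is a closed subscheme of $X$, there is a graded ideal $\mathfrak{a}\subseteq R$ with $Z = \mathcal{V}_+(\mathfrak{a})$. Apply the isomorphism displayed in equation (8) with $M = R$ and $\mathcal{F} = \widetilde{R} = \mathcal{O}_X$:
\[
\bigoplus_{m\in\mathbb{Z}} H^i(U,\mathcal{O}_X(m)) \cong H_{\mathfrak{a}}^{i+1}(R) \qquad \text{for all } i>0.
\]
If $\cd(U)\leq r$, then the left-hand side vanishes for all $i>r$, hence $H_{\mathfrak{a}}^{j}(R)=0$ for every $j\geq r+2$ (the indices $j\geq 2$ are precisely those covered by the correspondence). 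Therefore $\cd(R,\mathfrak{a})\leq r+1$.

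Next, observe that the topological spaces $\Proj(R)\cap \Supp(R) = \Proj(R) = X$ and $\Proj(R)\cap \Supp(R/\mathfrak{a}) = \mathcal{V}_+(\mathfrak{a}) = Z$ coincide with the geometric objects of interest. Applying Corollary \ref{marge} to the module $M = R$ and the ideal $\mathfrak{a}$ yields
\[
\c(Z) \;=\; \c(\Proj(R)\cap \Supp(R/\mathfrak{a})) \;\geq\; \c(X) - \cd(R,\mathfrak{a}) - 1 \;\geq\; \c(X) - r - 2,
\]
which is the desired inequality.

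For the strict inequality when $X$ is reducible, one uses the ``Moreover'' clause of Corollary \ref{marge}: it suffices to note that the irreducible components of $X = \Proj(R)$ are in bijection with the graded minimal primes of $R$ not containing the irrelevant ideal, so a reducible $X$ forces $R$ to have more than one minimal prime. No single step is a genuine obstacle; the only careful point is making sure that the range $i>0$ in the Serre--Grothendieck correspondence still produces the full bound $\cd(R,\mathfrak{a})\leq r+1$, which it does since $r\geq 0$.
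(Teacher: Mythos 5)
Your proposal is correct and follows essentially the same route as the paper: translate $\cd(U)\leq r$ into $\cd(R,\mathfrak{a})\leq r+1$ via the Serre--Grothendieck correspondence (8) applied to $\mathcal{O}_X=\widetilde{R}$, then invoke Corollary \ref{marge} with $M=R$, including its ``Moreover'' clause for the strict inequality when $X$ is reducible. The extra care you take about the index range $i>0$ and about reducibility of $X$ forcing $R$ to have more than one minimal prime is sound and only makes explicit what the paper leaves implicit.
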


\begin{proof}
Let $X= \Proj(R)$ with $R$ a graded finitely generated
$k$-algebra, and let $\mathfrak{a}$ be the graded ideal which
determines $Z$. By hypothesis we have $H^i(U, \mathcal{O}_X(m))=0$
for all $i > r$ and for all $m \in \mathbb{Z}$. Then, since
$\mathcal{O}_X= \widetilde{R}$, from the Serre-Grothendieck
correspondence (8) it follows that $\cd(R, \mathfrak{a}) \leq r + 1$.
Hence from Corollary \ref{marge},
\[ \c(Z) \geq \c(X)-r-2. \]
Moreover, again from Corollary \ref{marge}, if $X$ is reducible,
the inequality is strict.
\end{proof}

From Theorem \ref{kaprapal} we can immediately obtain the
following corollaries.

\begin{cor}\label{abrham}
Let $X,U,Z$ be as in Theorem \ref{kaprapal}. If $U$ is affine,
then $\c(Z) \geq \c(X) -2$, where the inequality is strict if $X$
is reducible. In particular, if $X$ is connected in codimension 1
(for example, $X$ irreducible or $X$ Cohen-Macaulay (see Corollary
\ref{smithers})) and $\codim(Z,X)=1$, then $Z$ is connected in
codimension 1.
\end{cor}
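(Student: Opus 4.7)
The plan is to derive this corollary directly from Theorem \ref{kaprapal} together with the Serre characterization of affine schemes, and then do a small case analysis for the ``in particular'' statement.

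First I would invoke Serre's theorem (cited in the text via \cite[Theorem 3.7]{hartshorne1}) which tells us that a noetherian scheme is affine precisely when its cohomological dimension is $0$. Thus if $U$ is affine we may take $r = 0$ in Theorem \ref{kaprapal}, yielding immediately
\[
\c(Z) \geq \c(X) - 0 - 2 = \c(X) - 2,
\]
with strict inequality when $X$ is reducible, since the strictness statement is inherited verbatim from Theorem \ref{kaprapal}.

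For the ``in particular'' assertion, the key identity is $\dim Z = \dim X - 1$, which comes from the hypothesis $\codim(Z,X) = 1$. I would then split into two subcases. If $X$ is irreducible then $\c(X) = \dim X$, so the first part gives $\c(Z) \geq \dim X - 2 = \dim Z - 1$, i.e.\ $Z$ is connected in codimension $1$. If instead $X$ is reducible but connected in codimension $1$, then $\c(X) \geq \dim X - 1$ and the strictness of the inequality yields $\c(Z) \geq \c(X) - 1 \geq \dim X - 2 = \dim Z - 1$, again the desired conclusion. The parenthetical remark that the hypothesis is satisfied when $X$ is Cohen--Macaulay is justified by the forthcoming Corollary~\ref{smithers}, so no extra argument is needed here.

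There is no real obstacle in this proof: both halves are essentially bookkeeping on top of Theorem \ref{kaprapal}. The only mildly delicate point is to remember to use the strict inequality in the reducible case, since without it one would only get $\c(Z) \geq \dim Z - 2$ when $\c(X) = \dim X - 1$, which would be insufficient to conclude connectedness in codimension $1$.
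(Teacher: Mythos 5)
Your proof is correct and follows the same route as the paper, whose entire argument is the one line ``by Serre's affineness criterion $\cd(U)=0$, so conclude by Theorem \ref{kaprapal}.'' You additionally write out the bookkeeping for the ``in particular'' clause that the paper leaves implicit, and you correctly identify the one delicate point: in the reducible case one must use the \emph{strict} inequality (so $\c(Z)\geq \c(X)-1$) to recover $\c(Z)\geq \dim Z-1$ from $\c(X)\geq \dim X-1$.
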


\begin{proof}
By the affineness criterion of Serre $\cd(U)=0$, so we conclude by
Theorem \ref{kaprapal}.
\end{proof}



\begin{cor}\label{martin}

Let $X$ be a projective scheme over $k$ of dimension $r$ and
connected in codimension 1. Let $U$ be an open subscheme of $X$
such that $\cd(U) \leq r-2$. Then $X \setminus U$ is connected.

\end{cor}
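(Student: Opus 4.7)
The plan is to apply Theorem \ref{kaprapal} directly with the bound $\cd(U) \leq r-2$. Setting $Z = X \setminus U$, the theorem gives
$$\c(Z) \geq \c(X) - (r-2) - 2 = \c(X) - r,$$
with strict inequality whenever $X$ is reducible. Combined with the hypothesis that $X$ is connected in codimension $1$, which yields $\c(X) \geq \dim X - 1 = r - 1$, the rest of the argument reduces to a short case analysis.

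First, if $X$ is irreducible, Lemma \ref{milhouse}(b) gives $\c(X) = \sdim X = \dim X = r$, so the inequality above reads $\c(Z) \geq 0$. If instead $X$ is reducible, I would invoke the \emph{strict} part of Theorem \ref{kaprapal}, which together with $\c(X) \geq r-1$ produces $\c(Z) > -1$ and hence $\c(Z) \geq 0$ as well.

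In both cases $\c(Z) \geq 0$. In view of the convention of Subsection \ref{subchap1.1}, where $\emptyset$ is disconnected of dimension $-1$, the inequality $\c(Z) \geq 0$ is precisely the statement that $Z$ is non-empty and connected, giving the corollary. The main obstacle was already absorbed into Theorem \ref{kaprapal}; the only subtle point here is the reducible case, where the strict form of that theorem is what pushes $\c(Z)$ from $\geq -1$ (which would permit disconnectedness) up to $\geq 0$.
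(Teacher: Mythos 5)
Your proposal is correct and is exactly the argument the paper intends: the corollary is stated as an immediate consequence of Theorem \ref{kaprapal}, and your case split (using $\c(X)=\sdim X=r$ via Lemma \ref{milhouse}(b) when $X$ is irreducible, and the strict form of Theorem \ref{kaprapal} when $X$ is reducible) is precisely the bookkeeping needed to get $\c(Z)\geq 0$, hence $Z$ non-empty and connected.
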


Corollary \ref{martin} is well known for $X$ irreducible, see Badescu \cite[Theorem
7.6]{badescu}.

\begin{os}
If $X=\mathbb{P}^r$ Hartshorne showed (see \cite[Theorem 3.2 p. 205]{hartshorne4}) that the viceversa to Corollary \ref{martin} holds, i. e. if $\mathbb{P}^r \setminus U$ is connected then $\cd(U)\leq r-2$.  However the viceversa to Theorem \ref{kaprapal} is far from be true also if $X$ is the projective space: for instance let $C$ be a smooth projective curve of positive genus over $\mathbb{C}$, and $Z$ the Segre product $Z=C \times \mathbb{P}^n \subseteq \mathbb{P}^N$. Then $H^1(Z, \mathcal{O}_Z)\neq 0$ by K\"unneth formula for coherent algebraic sheaves (see the paper of  Sampson and Washnitzer \cite[Theorem 1]{sa-wa}), so $\cd(\mathbb{P}^N \setminus Z)\geq N-2$ by \cite[Corollary 7.5]{hartshorne4}. However $Z$ is an irreducible smooth scheme.
\end{os}

\begin{es} Let $R=k[x,y,z,v,w]/(xyw-zvw)$ and
$\mathfrak{a}=(\overline{x}\overline{z},\overline{x}\overline{v},\overline{y}\overline{z},\overline{y}\overline{v})$
$= \wp_1 \cap \wp_2$, where  $\wp_1=(\overline{x},\overline{y})$
and $\wp_2= (\overline{z},\overline{v})$.\\
Furthermore, set $X= \Proj(R)$, $Z=\mathcal{V}_+(\mathfrak{a})$
and $U= X \setminus Z$. Our aim is to prove that $U$ is not
affine.\\
It is clear that $\height(\wp_1)= \height( \wp_2) = 1$, so $U$ may
be affine. However, $X$ is a complete intersection of $\mathbb{P}^4$, so, using Corollary \ref{marge}, $X$ is connected in codimension 1. But $\height(
\wp_1+ \wp_2)=3$, then $c(Z)=0$ by Lemma \ref{milhouse}; so, by
Corollary \ref{abrham} we conclude that $U$ is not affine.
\end{es}





\section{Connectivity of the initial ideal}
\label{chap2}

In this section we prove Corollary \ref{skinner}  given in  the  Introduction. More
generally, we  compare the connectivity dimension of $P/I$ with
the connectivity dimension of $P/ \init_{\omega}(I)$, where
$\init_{\omega}(I)$ denotes the initial ideal with respect a
weight vector $\omega \in (\mathbb{Z}_+)^n$ of $I$.

In the proof given here we  do not need to assume that the field
$k$ is algebraically closed. Moreover, in their paper, Kalkbrener
and Sturmfels first prove the result for the weight vector
$\omega=(1,1, \ldots, 1)$, assuming that $\init_{\omega}(I)$ is a
monomial ideal: one key step in their approach is the Fulton-Hanson
Connectedness Theorem (\cite[Corollary 19.6.8]{B-S}), which forces
them to assume $k$ algebraically closed. Then they use this case
to prove their result for an arbitrary monomial order
(\cite[Theorem 1]{ka-st}). Finally, they complete the proof for
arbitrary weight vectors (\cite[Theorem 2]{ka-st}).

In our proof, instead, we prove directly a more general result
(Theorem \ref{martina}) for arbitrary weight vectors. To this
purpose, as it is clear from the above discussion, we need the
notion of initial ideal with respect to a weight vector.

Let $\omega= (\omega_1, \ldots, \omega_n) \in \mathbb{N}^n$. Given
an element $f \neq 0$ in the polynomial ring $P$, we consider the
polynomial $f(t^{\omega_1}x_1, \ldots, t^{\omega_n}x_n) \in P[t]$,
and we call $\init_{\omega}(f)$ its leading coefficient. Note that
$\init_{\omega}(f) \in P$ is not necessarily a monomial. For an
ideal $I$ of $P$, set
\[ \init_{\omega}(I):= ( \{\init_{\omega}(f) : f \in I, f \neq 0\} ) \]
where $(A)$ denotes the ideal generated by elements of the set $A$.

For a monomial order $\prec$ we  say that $\omega$ represents
$\prec$ for the ideal $I$ if $\LT_{\prec}(I)= \init_{\omega}(I)$.
The reader can find the proof of the following useful result in
the book of Sturmfels (\cite[Proposition 1.11]{sturmfels2}).

\begin{thm}\label{telespallabob}

Given a monomial order $\prec$ in $P$, there exists $\omega \in
(\mathbb{Z}_+)^n$ which represents $\prec$ for $I$.

\end{thm}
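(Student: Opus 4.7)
The plan is to translate the problem into a finite system of strict linear inequalities on $\omega$ and then verify that any positive integer solution represents $\prec$ for $I$. First I would fix a (reduced) Gr\"obner basis $G=\{g_1,\ldots,g_s\}$ of $I$ with respect to $\prec$, and write
\[ g_i = c_i x^{\alpha_i} + \sum_{j} d_{ij}\, x^{\beta_{ij}}, \qquad \LT_\prec(g_i)=c_i x^{\alpha_i}, \quad x^{\alpha_i}\succ x^{\beta_{ij}}. \]
The key reduction is that it suffices to find $\omega\in(\mathbb{Z}_+)^n$ with $\init_\omega(g_i)=c_i x^{\alpha_i}$ for every $i$, i.e.\ with
\[ \omega\cdot(\alpha_i-\beta_{ij})>0\quad\text{for every }i,j, \]
a finite collection of strict linear inequalities in addition to the positivity $\omega_\ell>0$.

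Next I would establish the existence of such an $\omega$. For this I would invoke Robbiano's representation theorem: any monomial order on $P$ is given by lexicographic comparison of the images under a real matrix $M\in\mathbb{R}^{n\times n}$ whose columns $M e_\ell$ are lexicographically positive. Against any finite family of pairs $x^{\alpha_i}\succ x^{\beta_{ij}}$, a suitable positive combination of the rows of $M$ (the first row perturbed by small multiples of the subsequent rows) lies in the intersection of the corresponding open half-spaces and in the positive orthant; rational approximation and clearing denominators then yield $\omega\in(\mathbb{Z}_+)^n$.

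Finally I would verify that this $\omega$ does represent $\prec$ for $I$. By construction $\init_\omega(g_i)=\LT_\prec(g_i)$, and since $G$ is a $\prec$-Gr\"obner basis we immediately get
\[ \LT_\prec(I)=(\LT_\prec(g_i))_{i=1}^s=(\init_\omega(g_i))_{i=1}^s\subseteq\init_\omega(I). \]
For the reverse inclusion $\init_\omega(I)\subseteq\LT_\prec(I)$ I would pass to the $\omega$-homogenization $\widetilde{I}\subseteq P[t]$, making $P[t]/\widetilde{I}$ graded by $\deg(x_\ell)=\omega_\ell$, $\deg(t)=1$. Flatness of $P[t]/\widetilde{I}$ over $k[t]$ gives $P/\init_\omega(I)\cong(P[t]/\widetilde{I})/(t)$ and $P/I\cong(P[t]/\widetilde{I})/(t-1)$; combined with the identity $\init_\omega(g_i)=\LT_\prec(g_i)$ and the Gr\"obner basis property, this forces $\init_\omega(I)=(\init_\omega(g_i))$, which equals $\LT_\prec(I)$.

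\textbf{Main obstacle.} The subtle step is the inclusion $\init_\omega(I)\subseteq\LT_\prec(I)$: the linear inequalities above guarantee the coincidence of $\init_\omega$ and $\LT_\prec$ only on the finitely many elements of the Gr\"obner basis, and it is not a priori clear that this coincidence propagates to every $f\in I$, since the division algorithm for $f$ introduces monomials on which $\omega$ need not refine $\prec$. The flat-family argument through the $\omega$-homogenization is the cleanest way to close this gap; a direct division-algorithm argument is possible but more bookkeeping-intensive.
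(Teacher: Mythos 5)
The paper does not actually prove this statement: it is quoted from Sturmfels (\cite[Proposition 1.11]{sturmfels2}) and used as a black box. Your sketch is, in essence, Sturmfels' own proof, so there is no conflict of method --- only a question of whether your outline is complete. The reduction to finitely many strict inequalities $\omega\cdot(\alpha_i-\beta_{ij})>0$ coming from the reduced Gr\"obner basis, and the existence of an integral solution via Robbiano's matrix representation plus a lexicographic perturbation of the first row (remembering to include the pairs $(e_\ell,0)$ so that $\omega$ lands in the open positive orthant, then taking a nearby rational point and clearing denominators), are both correct and standard.

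The only place where your write-up is thinner than it needs to be is exactly the step you flag: the inclusion $\init_{\omega}(I)\subseteq\LT_{\prec}(I)$. Saying that flatness of $P[t]/{}^{\omega}I$ over $k[t]$ ``forces'' $\init_{\omega}(I)=(\init_{\omega}(g_i))$ is an assertion, not an argument; in the non-graded case the two fibers $P/I$ and $P/\init_{\omega}(I)$ do not carry comparable Hilbert functions, so the forcing is not automatic. Two standard ways to close it: (i) first pass to ${}^{\omega}I$, which \emph{is} graded, and compare Hilbert functions of the flat family's fibers there; or (ii) argue combinatorially, using $\LT_{\prec}(\init_{\omega}(I))=\LT_{\prec_{\omega}}(I)$ (where $\prec_{\omega}$ refines the $\omega$-weight by $\prec$), noting that $\prec_{\omega}$ and $\prec$ select the same leading terms on $G$, so that $\LT_{\prec}(I)\subseteq\LT_{\prec_{\omega}}(I)$ with both sets of standard monomials forming $k$-bases of $P/I$ (Macaulay), whence $\LT_{\prec}(\init_{\omega}(I))=\LT_{\prec}(I)$ and finally $\init_{\omega}(I)=\LT_{\prec}(I)$ because an inclusion of ideals with equal initial ideals is an equality. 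With either of these inserted, your proof is complete and matches the cited source.
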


In light of Theorem \ref{telespallabob}, to our purpose we can study, given an ideal, its initial ideals with
respect to weight vectors.

Now we need some results about homogenization and dehomogenization
of ideals of a polynomial ring. Many of them are part of the folklore,
however we state them, with our language, for the convenience of
the reader. These topics can be found in \cite{hu-ta} or in the
book of Kreuzer and Robbiano \cite[Chapter 4, Section 3]{K-R}.

Let $\omega \in \mathbb{N}^n$ and $f \in P$: we define the
$\omega$-degree of $f$ the positive integer

\[ \deg_{\omega} f := \max \{ \omega \cdot a : \underline{x}^a \mbox{ is a term of } f \}. \]
We consider the polynomial $^{\omega}\! f \in P[t]$, where $t$ is
an independent variable, defined as:

\[ ^{\omega}\! f(x_1, \ldots, x_n, t) := f(\frac{x_1}{t^{\omega_1}},\ldots, \frac{x_n}{t^{\omega_n}})t^{\deg_{\omega}f}. \]
We  call $^{\omega}\! f$ the $\omega$-homogenization of $f$.

Moreover, we  call the $\omega$-homogenization of $I$ the
following ideal of $P[t]$:
\[ ^{\omega}\! I:=(\{^{\omega}\! f:f \in I\}). \]
Note that $^{\omega}\! I$ is indeed a graded ideal of the
polynomial ring $k[x_1, \ldots , x_n , t]$ with the grading (which
we  call $\omega$-graduation) defined as: $\deg x_i = \omega_i$
for all $i=1, \ldots, n$ and $\deg t = 1$.

We can define an operation of dehomogenization:
\begin{gather*}
\pi: P[t] \longrightarrow P \\
F(x_1, \ldots , x_n , t) \mapsto F(x_1 , \ldots , x_n , 1)
\end{gather*}
Note that $\pi$, in spite of the homogenization's operation, is a
homomorphism of $k$-algebras.

Now we present some easy, but very useful, remarks:

\begin{oss}\label{lennie}

Let $\omega \in \mathbb{N}^n$. Then
\begin{itemize}
\item[(1)] for all $f \in P$ we have $\pi(^{\omega}\! f)=f$;

\item[(2)] let $F \in P[t]$ be an homogeneous polynomial (with
respect to the $\omega$-graduation) such that $F \notin (t)$. Then
$^{\omega}(\pi(F))=F$; moreover, for all $l \in \mathbb{N}$, if
$G=t^l F$ we have $ ^{\omega}(\pi(G)) t^l=G$;

\item[(3)] if $F \in \mbox{ \ }\! \! \! \! ^{\omega}\! I$, then
$\pi(F) \in I$; in fact if $F \in \mbox{ \ }\! \! \! \! ^{\omega}
\! I$ there exist $f_1, \ldots, f_m \in I$ and $r_1, \ldots , r_m
\in P[t]$ such that $ F = \sum_{i=1}^m r_i \mbox{ \ }\! \! \! \!
^{\omega}\! f_i$, and hence by part (1)

\[ \pi(F)=\sum_{i=1}^m \pi(r_i) \pi(^{\omega}\!
f_i)=\sum_{i=1}^m \pi(r_i) f_i \in I; \]

\item[(4)] in$_{\omega}(I)P[t] + (t) = \mbox{ \ }\! \! \! \!
^{\omega}\! I + (t)$. In particular, since $\init_{\omega}(I)P[t]$
is generated by polynomials in $P$, we have $P[t]/(^{\omega}\! I +
(t)) \cong P/\init_{\omega}(I)$.

\end{itemize}

\end{oss}

Now we introduce two elementary but fundamental lemmas

\begin{lem}\label{ralph}

Let $\omega \in \mathbb{N}^n$ and $I$ and $J$ two ideals of $P$.
Then
\begin{itemize}
\item[(1)]$^{\omega}\! (I \cap J)= \mbox{ \ }\! \! \! \!
^{\omega}\! I \cap \mbox{ \ }\! \! \! \! ^{\omega}\! J$;
\item[(2)]$I$ is prime if and only if \ $^{\omega}\! I$ is prime;
\item[(3)]$^{\omega} \! (\sqrt{I})=\sqrt{^{\omega}\! I}$;
\item[(4)]$I=J$ if and only if \ $^{\omega}\! I= \mbox{ \ }\! \!
\! \! ^{\omega}\! J$; \item[(5)]$\wp_1, \ldots, \wp_s$ are the
minimal primes of $I$ if and only if \ $^{\omega}\! \wp_1, \ldots,
^{\omega}\! \wp_s$ are the minimal primes of $^{\omega}\! I$;
\item[(6)]$\dim P/I +1=\dim P[t]/^{\omega}\! I $.
\end{itemize}

\end{lem}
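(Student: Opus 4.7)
The whole lemma will reduce to a single key observation: for any homogeneous polynomial $F \in P[t]$ (with respect to the $\omega$-grading), $F \in {}^{\omega}I$ if and only if $\pi(F) \in I$. The forward direction is part (3) of Remark \ref{lennie}; for the converse, write $F = t^l F'$ with $F' \notin (t)$, so that by part (2) of Remark \ref{lennie}, $F' = {}^{\omega}(\pi(F'))$, and since $\pi(F') = \pi(F) \in I$ one has $F' \in {}^{\omega}I$ and hence $F \in {}^{\omega}I$. Because ${}^{\omega}I$ is graded in the $\omega$-grading of $P[t]$, membership can always be tested on homogeneous components, so this observation is essentially the only tool needed.

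Items (1)--(4) will then follow quickly. For (1), ${}^{\omega}(I \cap J) \subseteq {}^{\omega}I \cap {}^{\omega}J$ is immediate from the definition (since $f \in I \cap J$ gives ${}^{\omega}f \in {}^{\omega}I \cap {}^{\omega}J$); conversely, a homogeneous $F$ lying in both ${}^{\omega}I$ and ${}^{\omega}J$ has $\pi(F) \in I \cap J$ by the observation, hence $F \in {}^{\omega}(I \cap J)$. For (2), the additivity of $\deg_\omega$ gives ${}^{\omega}(fg) = {}^{\omega}f \cdot {}^{\omega}g$, and combined with the standard fact that primality of a graded ideal can be tested on homogeneous elements, both directions go through. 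For (3), ${}^{\omega}\sqrt{I} \subseteq \sqrt{{}^{\omega}I}$ uses ${}^{\omega}(f^n) = ({}^{\omega}f)^n$; conversely, if $F$ is homogeneous with $F^n \in {}^{\omega}I$ then $\pi(F)^n = \pi(F^n) \in I$, so $\pi(F) \in \sqrt{I}$ and the observation delivers $F \in {}^{\omega}\sqrt{I}$. Finally (4) is immediate from $f = \pi({}^{\omega}f)$ together with Remark \ref{lennie}(3): if ${}^{\omega}I = {}^{\omega}J$, then for any $f \in I$ one has ${}^{\omega}f \in {}^{\omega}J$, so $f = \pi({}^{\omega}f) \in J$, and symmetrically.

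For (5), let $\wp_1, \ldots, \wp_s$ be the minimal primes of $I$, so that $\sqrt{I} = \bigcap_i \wp_i$. Combining (3) and (1) gives $\sqrt{{}^{\omega}I} = \bigcap_i {}^{\omega}\wp_i$, with each ${}^{\omega}\wp_i$ prime by (2). To confirm irredundancy, assume ${}^{\omega}\wp_1 \supseteq \bigcap_{i \geq 2} {}^{\omega}\wp_i$; applying $\pi$, and using $\pi({}^{\omega}J) = J$ (which comes from $f = \pi({}^{\omega}f)$), one obtains $\wp_1 \supseteq \bigcap_{i \geq 2} \wp_i$, contradicting irredundancy of the $\wp_i$. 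The converse is entirely symmetric, using (4) to recover $\sqrt I = \bigcap_i \wp_i$ from $\sqrt{{}^{\omega}I} = \bigcap_i {}^{\omega}\wp_i$.

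For (6), the key observation shows that $t$ is a nonzerodivisor modulo ${}^{\omega}I$: if $tF \in {}^{\omega}I$ with $F$ homogeneous, then $\pi(F) = \pi(tF) \in I$, so $F \in {}^{\omega}I$. Hence $\dim P[t]/{}^{\omega}I = \dim P[t]/({}^{\omega}I + (t)) + 1$; by Remark \ref{lennie}(4) one has $P[t]/({}^{\omega}I + (t)) \cong P/\init_{\omega}(I)$, and the classical fact that passing to an initial ideal preserves Krull dimension then gives $\dim P/\init_{\omega}(I) = \dim P/I$. The only real obstacle throughout is the bookkeeping around the factor $t^l$ in the key observation, plus the invocation of the classical dimension-preservation statement in (6); once the observation of the first paragraph is available, all six parts fit together routinely.
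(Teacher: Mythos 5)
Your ``key observation'' ($F$ homogeneous lies in ${}^{\omega}I$ if and only if $\pi(F)\in I$) is correct, is proved correctly from Remarks \ref{lennie}(2)--(3), and is exactly the engine behind the paper's own treatment of (4) and (5); your irredundancy check in (5) via applying $\pi$ is the same argument the paper runs. For (1)--(3) the paper simply cites \cite[Proposition 4.3.10]{K-R}, whereas you prove them directly; your arguments there are sound (the reductions to homogeneous elements are legitimate because ${}^{\omega}I$, ${}^{\omega}I\cap{}^{\omega}J$ and $\sqrt{{}^{\omega}I}$ are all graded, and ${}^{\omega}(fg)={}^{\omega}f\cdot{}^{\omega}g$ holds since $P$ is a domain). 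So through item (5) your proposal is correct and, if anything, more self-contained than the paper.

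Item (6) is where you genuinely diverge, and where there is a gap. The paper lifts a maximal chain of primes over $I$ to a chain over ${}^{\omega}I$ of length one greater (appending $(x_1,\dots,x_n,t)$), obtains $\height({}^{\omega}I)\geq\height(I)$ the same way, and closes with the dimension formula for polynomial rings over a field. You instead argue that $t$ is a nonzerodivisor modulo ${}^{\omega}I$ (true, and correctly deduced from the key observation) and conclude $\dim P[t]/{}^{\omega}I=\dim P[t]/({}^{\omega}I+(t))+1$. That equality does \emph{not} follow from nonzerodivisoriality alone: in $S=k[u,v,w]/(uv,uw)$ the element $u-1$ is a nonzerodivisor, yet $\dim S/(u-1)=0=\dim S-2$. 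What saves you is that $t$ is \emph{homogeneous of positive degree} in the graded ring $P[t]/{}^{\omega}I$, so that $(t)+\wp$ is a proper ideal for every (necessarily graded) minimal prime $\wp$, and Krull's principal ideal theorem then gives the missing inequality $\dim S/(t)\geq\dim S-1$; you must say this. Separately, your final step imports the classical fact $\dim P/\init_{\omega}(I)=\dim P/I$. This is not circular --- it has independent proofs via independent sets of variables for term orders, refined to weight vectors as in \cite{sturmfels2} --- but it is a heavier input than the statement being proved, and the paper deliberately arranges matters so that (6) is established \emph{without} it (indeed (6) plus your nonzerodivisor remark is how one would \emph{derive} dimension preservation). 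Either patch the graded dimension-drop step and cite the dimension-preservation result explicitly, or replace (6) by the paper's chain-lifting argument, which needs neither.
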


\begin{proof}
For (1), (2) and (3) see \cite[Proposition 4.3.10]{K-R}; for (2) see also \cite[Lemma 7.3, (1)]{hu-ta}

(4). This follows easily from points (1) and (3) of Remarks
\ref{lennie}.

(5). If $\wp_1, \ldots, \wp_s$ are the minimal primes of $I$, then
$\cap_{i=1}^s \wp_i = \sqrt{I}$. So (1) and (3) imply
$\cap_{i=1}^s \mbox{ \ }\! \! \! \! ^{\omega}\! \wp_i =
\sqrt{^{\omega}\! I}$. Then part (2) implies that all
minimal primes of $^{\omega}\! I$ are contained in the set
$\{^{\omega}\! \wp_1, \ldots,^{\omega}\! \wp_s \}$. Moreover, by
point (4) all the primes in this set are minimal for $^{\omega}\!
I$.

Conversely, if $^{\omega}\! \wp_1, \ldots, ^{\omega}\! \wp_s$ are
the minimal primes of $^{\omega}\! I$,  then $\cap_{i=1}^s \mbox{
\ }\! \! \! \! ^{\omega}\! \wp_i = \sqrt{^{\omega}\! I}$. So from
(1) and (3) follows that $^{\omega}\! (\cap_{i=1}^s \wp_i) =
\mbox{ \ }\! \! \! \! ^{\omega}\! \sqrt{I}$; by part (4)  $\cap_{i=1}^s \wp_i = \sqrt{I}$, so using (2) we
have that all the minimal primes of $I$ are contained in the set
$\{ \wp_1, \ldots, \wp_s \}$. Again using (4), the
primes in this set are all minimal for $I$.

(6). This was proven in \cite[Lemma 7.3, (3)]{hu-ta} when $\in_{\omega}(I)$ is a monomial ideal. With a different argument we obtain the general statement: if $\wp_0 \varsubsetneq \wp_1 \varsubsetneq \ldots
\varsubsetneq \wp_d$ is a strictly increasing chain of prime
ideals such that $I \subseteq \wp_0$, then, by (2)  and (4)
$^{\omega}\! \wp_0 \varsubsetneq \mbox{ \ }\! \! \! \! ^{\omega}\!
\wp_1 \varsubsetneq \ldots \varsubsetneq \mbox{ \ }\! \! \! \!
^{\omega}\! \wp_d \varsubsetneq (x_1, \ldots, x_n, t)$ is a
strictly increasing chain of prime ideals such that $^{\omega}\! I
\subseteq \mbox{ \ }\! \! \! \! ^{\omega}\! \wp_0$, so $\dim P[t]/
^{\omega}\! I \geq \dim P/I +1$. Similarly $\height(^{\omega}\!
I)\geq \height(I)$ and we conclude using the fact that  a
polynomial ring over a field is catenary.
\end{proof}

\begin{lem}\label{piccoloaiutantedibabbonatale}

Let $\omega \in \mathbb{N}^n$. Then
\[\c(P[t]/
^{\omega}\! I) \geq \c(P/I) + 1. \]
\end{lem}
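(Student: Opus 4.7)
The plan is to combine the characterization of connectivity dimension from Lemma \ref{milhouse}(a) with the structural results on homogenization collected in Lemma \ref{ralph}. Let $\wp_1,\ldots,\wp_s$ be the minimal primes of $I$. By part (5) of Lemma \ref{ralph}, the minimal primes of $^{\omega}\!I$ are exactly $^{\omega}\!\wp_1,\ldots,^{\omega}\!\wp_s$, so both connectivity dimensions are computed as a minimum indexed by the same set $\mathcal{S}(s)$ of pairs $(A,B)$ of nonempty subsets covering $\{1,\ldots,s\}$. This matches up the "cuts" on both sides.

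Fix such a pair $(A,B)$ and set $J_1:=\cap_{i\in A}\wp_i$ and $J_2:=\cap_{j\in B}\wp_j$. Using part (1) of Lemma \ref{ralph}, one has $\cap_{i\in A}{}^{\omega}\!\wp_i = {}^{\omega}\!J_1$ and similarly $^{\omega}\!J_2$. The key observation, which I would state explicitly, is the inclusion
\[
{}^{\omega}\!J_1 + {}^{\omega}\!J_2 \;\subseteq\; {}^{\omega}(J_1 + J_2),
\]
which is immediate from the definitions: any $f\in J_i\subseteq J_1+J_2$ gives $^{\omega}\!f\in{}^{\omega}(J_1+J_2)$. Homogenization does not commute with sums, so the reverse inclusion may fail, but only this direction is needed here, and it forces a surjection
\[
P[t]/({}^{\omega}\!J_1+{}^{\omega}\!J_2)\twoheadrightarrow P[t]/{}^{\omega}(J_1+J_2).
\]
Combining this surjection with part (6) of Lemma \ref{ralph} applied to the ideal $J_1+J_2$ of $P$ yields
\[
\dim P[t]/({}^{\omega}\!J_1+{}^{\omega}\!J_2) \;\geq\; \dim P[t]/{}^{\omega}(J_1+J_2) \;=\; \dim P/(J_1+J_2) + 1.
\]

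Finally, I would take the minimum over $(A,B)\in\mathcal{S}(s)$ on both sides. By Lemma \ref{milhouse}(a) applied to $\Supp(P[t]/{}^{\omega}\!I)$ and to $\Supp(P/I)$, this gives exactly $\c(P[t]/{}^{\omega}\!I) \geq \c(P/I) + 1$. The case $s=1$ (a single minimal prime) is not actually excluded by the above, since then $\c$ coincides with $\sdim$ by Lemma \ref{milhouse}(b), and Lemma \ref{ralph}(6) gives equality directly. There is no real obstacle in this argument; the only delicate point is recognizing that even though homogenization is not additive on ideals, the one-sided inclusion above suffices to push the dimension estimate in the desired direction.
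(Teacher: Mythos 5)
Your argument is correct and is essentially the paper's own proof: the same reduction via Lemma \ref{ralph}(5) and (1) to matching pairs $(A,B)$, the same key one-sided inclusion ${}^{\omega}\!J_1+{}^{\omega}\!J_2\subseteq{}^{\omega}(J_1+J_2)$, and the same dimension count from Lemma \ref{ralph}(6) combined with Lemma \ref{milhouse}(a). If anything, your handling of the minimum over pairs (bounding every pair and then minimizing both sides) is stated more carefully than in the paper, which fixes a single optimal pair.
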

\begin{proof}
Let $\wp_1, \ldots, \wp_k$ be the minimal prime ideals of $I$. By Lemma \ref{milhouse} (a), we can choose $(A, B)\in
\mathcal{S}(k)$ such that, setting $\mathcal{J}:= \cap_{i \in A}
\wp_i$ and $\mathcal{K}:= \cap_{j \in B}\wp_j$,
\[ \c (P/I)= \dim P/(\mathcal{J} + \mathcal{K}). \]
From Lemma \ref{ralph} (5) it follows that $^{\omega}\! \wp_1, \ldots
, ^{\omega}\! \wp_k$ are the minimal prime ideals of $^{\omega}\!
I$, and by point (1) of Lemma \ref{ralph} we have $^{\omega}\!
\mathcal{J} = \cap_{i \in A} \mbox{ \ }\! \! \! \! ^{\omega}\!
\wp_i$ and $^{\omega}\! \mathcal{K} = \cap_{j \in B} \mbox{ \ }\!
\! \! \! ^{\omega}\! \wp_j$. Obviously, if $H, L \subseteq P$ are
ideals of $P$, then $^{\omega}\! H + \mbox{ \ }\! \! \! \!
^{\omega}\! L \subseteq \mbox{ \ }\! \! \! \! ^{\omega}\! (H +
L)$, hence,  using Lemma \ref{ralph} (6), $\dim
P[t]/(^{\omega}\! H + ^{\omega}\! L)\geq \dim P[t]/(^{\omega}\! (H
+ L))= \dim P/(H + L)+1$. Hence
\[ \c(P[t]/^{\omega}\! I) \geq \dim P[t]/(^{\omega}\! \mathcal{J} + ^{\omega}\! \mathcal{K}) \geq
\dim  P/(\mathcal{J} + \mathcal{K}) + 1 = \c (P/I) + 1,\]
so we are done.
\end{proof}

Finally, we are able to prove the main result of this paper.

\begin{thm}\label{martina}

Let $\omega \in (\mathbb{Z}_+)^n$. Then
\[ \c (P/\init_{\omega}(I)) \geq \c (P/I) - 1. \]
Moreover, if $I$ has more than one minimal prime ideal, the
inequality is strict.

\end{thm}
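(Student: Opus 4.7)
The plan is to realize the passage $I \rightsquigarrow \init_\omega(I)$ geometrically as a one-parameter degeneration, using the $\omega$-homogenization in $P[t]$ together with the already-established graded Connectedness Theorem (Theorem \ref{winchester}).

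Concretely, I would set $R := P[t]/{^{\omega}\!I}$, endowed with the $\omega$-graduation (so $\deg x_i = \omega_i > 0$ and $\deg t = 1$), which makes it a finitely generated positively graded $k$-algebra in the sense of Subsection \ref{subchap1.2}. Let $\mathfrak{a} := (\overline{t}) \subseteq R$, a homogeneous ideal. Two features of this setup do all the work:
\begin{itemize}
\item By Remark \ref{lennie}(4) there is a graded isomorphism $R/\mathfrak{a} \cong P/\init_{\omega}(I)$.
\item Since $\mathfrak{a}$ is generated by one element, $\ara(\mathfrak{a}) \leq 1$, hence $\cd(R,\mathfrak{a}) \leq 1$.
\end{itemize}

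With these observations in hand, applying Theorem \ref{winchester} to $R$ and the graded ideal $\mathfrak{a}$ gives
\[
\c(P/\init_{\omega}(I)) \;=\; \c(R/\mathfrak{a}) \;\geq\; \c(R) - \cd(R,\mathfrak{a}) - 1 \;\geq\; \c(R) - 2.
\]
The other ingredient is Lemma \ref{piccoloaiutantedibabbonatale}, which supplies $\c(R) = \c(P[t]/{^{\omega}\!I}) \geq \c(P/I) + 1$. Chaining the two inequalities yields
\[
\c(P/\init_{\omega}(I)) \;\geq\; (\c(P/I)+1) - 2 \;=\; \c(P/I) - 1,
\]
which is the desired bound.

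For the strict inequality in the case where $I$ has more than one minimal prime, I would use Lemma \ref{ralph}(5): the minimal primes of $^{\omega}\!I$ are exactly the $^{\omega}\!\wp_i$ for $\wp_i$ minimal over $I$, so if $I$ has at least two minimal primes then so does $R$. In this situation the "moreover" part of Theorem \ref{winchester} promotes the above inequality for $\c(R/\mathfrak{a})$ to a strict one, and therefore $\c(P/\init_{\omega}(I)) > \c(P/I) - 1$, i.e.\ $\c(P/\init_{\omega}(I)) \geq \c(P/I)$.

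There is no real obstacle here: all the heavy lifting has been done in Section \ref{chap1} (Theorem \ref{winchester}) and in the preparatory lemmas of this section (in particular \ref{ralph} and \ref{piccoloaiutantedibabbonatale}). The only conceptual point worth emphasizing is the choice to apply the graded — rather than local — version of the Connectedness Theorem, which is available precisely because the $\omega$-homogenization was designed so that $t$ becomes a homogeneous element of positive degree in a positively graded $k$-algebra.
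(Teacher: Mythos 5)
Your proposal is correct and follows exactly the paper's own argument: apply Theorem \ref{winchester} to $P[t]/{^{\omega}\!I}$ with the principal graded ideal generated by $t$ (so $\cd\leq\ara\leq 1$), identify the quotient with $P/\init_{\omega}(I)$ via Remark \ref{lennie}(4), feed in Lemma \ref{piccoloaiutantedibabbonatale}, and get strictness from Lemma \ref{ralph}(5) together with the ``moreover'' clause of Theorem \ref{winchester}. No differences worth noting.
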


\begin{proof}

Note that $P[t]/^{\omega}\! I$ is a finitely generated and
positively graded $k$-algebra and $(^{\omega}\! I +
(t))/^{\omega}\! I \subseteq P[t]/^{\omega}\! I$ is a graded ideal
of $P[t]/^{\omega}\! I$ (considering the $\omega$-grading).
Hence we can use Theorem \ref{winchester}, and deduce that
\begin{gather}
\c (P[t]/(^{\omega}\! I + (t))) \geq \c( P[t]/^{\omega}\! I) - \cd
(P[t]/^{\omega}\! I ,(^{\omega}\! I + (t))/^{\omega}\! I) - 1.
\end{gather}
Obviously $\ara((^{\omega}\! I + (t))/^{\omega}\! I)\leq 1$, so
$\cd ( P[t]/^{\omega}\! I , (^{\omega}\! I + (t))/^{\omega}\! I )
\leq 1$. Hence
\begin{gather}
\c( P[t]/(^{\omega}\! I  + (t))) \geq \c (P[t]/^{\omega}\! I) - 2.
\end{gather}
By Lemma \ref{piccoloaiutantedibabbonatale} we have
$\c(P[t]/^{\omega}\! I) \geq \c(P/I) + 1$, and by the point (4) of
the Remarks \ref{lennie}, we have that $P[t]/(^{\omega}\! I + (t))
\cong P/\init_{\omega}I$, so
\begin{gather}
\c (P/\init_{\omega}(I)) \geq \c (P/I ) - 1.
\end{gather}
Moreover, if $I$ has more than one minimal prime ideals, also
$^{\omega}\! I$ has (point (5) of Lemma \ref{ralph}), then
inequalities in (9), in (10) and in (11) are strict.
\end{proof}

\begin{cor}
(Kalkbrener and Sturmfels). Let $\omega \in (\mathbb{Z}_+)^n$. If
$I$ is a prime ideal, then $P/\init_{\omega}(I)$ is connected in
codimension 1.
\end{cor}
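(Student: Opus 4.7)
The plan is to deduce the corollary directly from Theorem \ref{martina}, combined with Lemma \ref{ralph} to pin down the relevant dimensions.

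First I would unpack what ``connected in codimension $1$'' means: by the definition given in Subsection \ref{subchap1.1}, what is required is the inequality $\c(P/\init_\omega(I)) \geq \dim(P/\init_\omega(I)) - 1$. Since $I$ is prime, $\Spec(P/I)$ is irreducible, hence by Lemma \ref{milhouse}(b) we have $\c(P/I) = \dim(P/I)$, and Theorem \ref{martina} yields
$$\c(P/\init_\omega(I)) \geq \c(P/I) - 1 = \dim(P/I) - 1.$$
Thus the corollary will follow once we know that $\dim(P/\init_\omega(I)) = \dim(P/I)$.

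To verify this dimension equality, I would exploit the primality of ${}^\omega\! I$ provided by Lemma \ref{ralph}(2): the ring $P[t]/{}^\omega\! I$ is a domain. Since $I$ is a proper ideal and $\pi(t)=1$, Remark \ref{lennie}(3) forces $t \notin {}^\omega\! I$, so $t$ is a nonzero element of the domain $P[t]/{}^\omega\! I$, hence a non-zero-divisor. Combining the isomorphism $P/\init_\omega(I) \cong P[t]/({}^\omega\! I + (t))$ from Remark \ref{lennie}(4) with Lemma \ref{ralph}(6) then gives
$$\dim(P/\init_\omega(I)) = \dim(P[t]/{}^\omega\! I) - 1 = \dim(P/I),$$
which closes the argument. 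No step here is really delicate; the only mild subtlety is the non-zero-divisor check for $t$, and even that is immediate once primality of ${}^\omega\! I$ has been recorded.
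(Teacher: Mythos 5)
Your proposal is correct and follows essentially the same route as the paper: the paper's proof simply notes that $\c(P/I)=\dim P/I$ for $I$ prime and invokes Theorem \ref{martina}, leaving the standard equality $\dim(P/\init_\omega(I))=\dim(P/I)$ implicit. Your extra verification of that equality via the primality of ${}^\omega\! I$, Remark \ref{lennie}(4) and Lemma \ref{ralph}(6) is a sound (and welcome) filling-in of that implicit step, not a different argument.
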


\begin{proof}
If $I$ is prime then $\c(P/I)= \dim P/I$. So Theorem
\ref{martina} implies the statement.
\end{proof}

\begin{os}\label{abu}
Theorem \ref{martina} and Theorem \ref{telespallabob} imply that
if $I$ has more than one minimal prime then for all monomial orders
$\prec$
\[ \c(P/\LT_{\prec}(I)) \geq \c(P/I). \]
In general this inequality is strict. In fact, for all graded
ideals $I \subseteq P$ and for all monomial orders $\prec$ there
exist a non-empty Zariski open set $U \subseteq \GL(n,k)$ and a
Borel-fixed ideal $J \subseteq P$ such that $\LT_{\prec}(g(I))=J$
for all $g \in U$.  The ideal $J$ is called the generic initial
ideal of $I$,  see  Eisenbud \cite[Theorem 15.18, Theorem
15.20]{eisenbud}. It is known that, since $J$ is Borel-fixed,
$\sqrt{J}= (x_1,\dots,x_c)$  where $c$ is the codimension of $I$,
see \cite[Theorem 15.23]{eisenbud}). Hence $\c(P/J)=\dim P/J =\dim
P/I$.  But $I$ can also be chosen in such a way that $\c(P/I)$ is smaller than $\dim P/I$.
\end{os}

\begin{os}

Sometimes, Theorem \ref{martina} can be used to give upper bounds
for the connectivity dimension of an ideal of $P$. In fact, if $B
\subseteq P$ is a monomial ideal, the connectivity dimension of
$P/B$ is simple to calculate, since the minimal prime ideals of
$B$ are easy to find and are generated by variables. So we can use
characterization of Remark \ref{hilbert} to calculate the
connectivity dimension of $P/B$. For
example, if $I$ is a graded ideal such that $\dim(\Proj(P/I)) \geq
1$, and there exists a monomial order $\prec$ such that
$\c(P/\LT_{\prec}(I))=0$, then $\Proj(P/I)$ is disconnected.

\end{os}

\begin{os}\label{eisgoto}

By Theorem \ref{martina} follows that the Eisenbud-Goto conjecture
is true for a certain class of  ideals: in their paper
\cite{eisenbudgoto}, Eisenbud and Goto conjectured that if $\wp
\subseteq P$ is a graded prime ideal which does not contain linear
forms, then
\[ \reg(P/\wp) \leq \mult(P/\wp)-\height(\wp) \]
where $\reg(\cdot)$ means the Castelnuovo-Mumford regularity, and
$\mult(\cdot)$ means the multiplicity. More generally,  the
inequality is expected to hold for radical graded ideals which are
connected in codimension $1$ and  do not contain linear forms. In
his paper \cite[Theorem 0.2]{terai}, Terai proved the conjecture
for (radical, connected in codimension $1$) monomial ideals. It is
well known that, if $I$ is graded, for any monomial order $\prec$
we have $\reg(P/I) \leq \reg(P/ \LT_{\prec}(I))$, $\mult(P/I)=
\mult(P/\LT_{\prec}(I))$ and $\height(I)=
\height(\LT_{\prec}(I))$. Hence  from the above discussion and by
Theorem \ref{martina} we  have that the Eisenbud-Goto conjecture
holds for ideals   which do not contain  linear forms,   are
connected in codimension $1$,  and  have a radical initial ideal.
\end{os}

\subsection{The initial ideal of a Cohen-Macaulay ideal}
\label{subchap2.1}

A result of Hartshorne  \cite{hartshorne} (see also \cite[Theorem
18.12]{eisenbud}), asserts  that a Cohen-Macaulay ring is
connected in codimension 1. Combining this with  Theorem
\ref{martina} it follows that  the initial ideal of a
Cohen-Macaulay ideal is connected in codimension 1. We
generalize Hartshorne's Theorem giving a formula of the connectivity dimension of a local ring as a function of its depth (Proposition \ref{barney}): the proof of this is very similar to the proof of original Hartshorne's Theorem, but the more general version allows us to obtain
a more precise  result (Corollary \ref{skinner}). Moreover we observe that actually a Cohen-Macaulay ring satisfies a stronger condition than to be connected in codimension 1 (Corollary \ref{smithers}). 

We start with the following  lemma.

\begin{lem}\label{maggie}
Let $(R,\mathfrak{m})$ be local. Then
\[ \dim R/\mathfrak{a} \geq \depth(R) - \depth(\mathfrak{a},R). \]
\end{lem}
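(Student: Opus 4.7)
The plan is to reduce to the case when $\mathfrak{a}$-depth is zero by killing a maximal regular sequence, and then exploit the standard fact that the depth of a module is bounded above by $\dim R/\wp$ for any associated prime $\wp$ of the module.

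Concretely, set $n := \depth(\mathfrak{a}, R)$ and let $x_1, \ldots, x_n \in \mathfrak{a}$ be a maximal $R$-regular sequence contained in $\mathfrak{a}$. Put $R' := R/(x_1, \ldots, x_n)$. By the standard behavior of depth along a regular sequence, $\depth R' = \depth R - n$. Moreover, by maximality of the regular sequence, every element of $\mathfrak{a}$ becomes a zerodivisor in $R'$, so the image of $\mathfrak{a}$ in $R'$ is contained in the union of the associated primes of $R'$, and by prime avoidance there exists $\wp' \in \mathrm{Ass}(R')$ with $\mathfrak{a} R' \subseteq \wp'$. Let $\wp \subseteq R$ be the preimage of $\wp'$; then $\mathfrak{a} \subseteq \wp$ and $\dim R/\wp = \dim R'/\wp'$.

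Now I would invoke the general fact that for any finitely generated module $M$ over a local ring and any $\wp \in \mathrm{Ass}(M)$ one has $\depth M \leq \dim R/\wp$. Applying this to $M = R'$ and the associated prime $\wp'$ gives $\depth R' \leq \dim R'/\wp'$. Chaining the inequalities,
\[
\dim R/\mathfrak{a} \;\geq\; \dim R/\wp \;=\; \dim R'/\wp' \;\geq\; \depth R' \;=\; \depth R - n,
\]
which is precisely the asserted bound.

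The only ingredient beyond routine manipulations with depth and Krull dimension is the inequality $\depth M \leq \dim R/\wp$ for $\wp \in \mathrm{Ass}(M)$; this is the main content and it is itself a quick induction on $\depth M$ (if $\depth M > 0$, choose $x \in \mathfrak{m}$ regular on $M$, pick an associated prime of $M/xM$ strictly containing $\wp$, and apply the inductive hypothesis). Since this statement is standard, I would simply cite it; the rest of the argument is a clean reduction and carries no hidden difficulty.
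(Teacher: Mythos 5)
Your argument is correct and is essentially the paper's own proof: both take a maximal regular sequence of length $\depth(\mathfrak{a},R)$ inside $\mathfrak{a}$, use prime avoidance to find an associated prime of the quotient containing $\mathfrak{a}$, and conclude via the standard inequality $\depth M \leq \dim R/\wp$ for $\wp \in \mathrm{Ass}(M)$ (the paper cites \cite[Theorem 17.2]{matsu} for exactly this). The only difference is cosmetic: you phrase the quotient as a ring $R'$ while the paper treats $R/J$ as an $R$-module.
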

\begin{proof}
Set $k:= \depth(R)$, $g:=\depth(\mathfrak{a},R)$, and $f_1,
\ldots, f_g \in \mathfrak{a}$ an $R$-sequence; if $J:=(f_1,
\ldots, f_g)$ we must have $\mathfrak{a} \subseteq \cup_{\wp \in
Ass(R/J)} \wp$, so there exists $\wp \in \mbox{Ass}(R/J)$ such that
$\mathfrak{a} \subseteq \wp$.

Obviously $\depth(R/J)= k - g$; moreover,  using \cite[Theorem
17.2]{matsu}, we have $\dim R/ \wp \geq k - g$; but $\mathfrak{a}
\subseteq \wp$ $\Longrightarrow$ $\dim R/\mathfrak{a} \geq \dim R/
\wp \geq k - g$, just end.

\end{proof}

Now we are ready to generalize Hartshorne's result.

\begin{prop}\label{barney}

Let $(R,\mathfrak{m})$ be a catenary local ring. Then
\[ \c(R) \geq \depth(R) - 1. \]
Moreover, suppose that $R$ satisfies $S_k$ Serre's condition, $k\geq 2$. Then $R$ is connected in codimension 1 and for every two minimal primes $\wp$ and $\wp'$ of $R$ and for each prime ideal $\mathcal{P} \supseteq \wp + \wp'$ there exists a sequence $\wp=\wp_1,$ $\ldots ,\wp_s=\wp'$ such that $\dim R/\wp_i+\dim R/\wp_{i+1} \geq \min \{k, \heigth(\mathcal{P})\}-1$ and $\wp_j \subseteq \mathcal{P}$ is a minimal prime of $R$ for each $i=1, \ldots , s-1$ and $j=1, \ldots ,s$.
\end{prop}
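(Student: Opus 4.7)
The plan is to follow the strategy of Hartshorne's classical theorem, sharpened by the numerical input of Lemma~\ref{maggie}. Suppose, for contradiction, that $\c(R) < \depth(R) - 1$. By Lemma~\ref{milhouse}(a) applied to the minimal primes $\wp_1, \ldots, \wp_r$ of $R$, there is a pair $(A, B) \in \mathcal{S}(r)$ realizing this minimum. The minimum cannot be achieved when $A = \{1, \ldots, r\}$, because then $\mathcal{J} := \bigcap_{i \in A} \wp_i$ would be nilpotent, and Lemma~\ref{maggie} applied to $\mathcal{K}$ would already give $\dim R/(\mathcal{J}+\mathcal{K}) = \dim R/\mathcal{K} \geq \depth(R) > \depth(R)-1$. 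By symmetry $B$ is proper as well, and replacing $B$ by $B \setminus A$ we may further assume $A$ and $B$ are disjoint. Setting $\mathcal{K} := \bigcap_{j \in B} \wp_j$, we have $\dim R/(\mathcal{J}+\mathcal{K}) < \depth(R) - 1$, which by Lemma~\ref{maggie} forces $\depth(\mathcal{J}+\mathcal{K}, R) \geq 2$.

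Because $A \cup B = \{1, \ldots, r\}$, the ideal $\mathcal{J} \cap \mathcal{K} \subseteq \sqrt{0}$ is nilpotent, so $H^i_{\mathcal{J}\cap\mathcal{K}}(R) = 0$ for $i \geq 1$ and $H^0_{\mathcal{J}\cap\mathcal{K}}(R) = R$. Feeding these vanishings into the Mayer--Vietoris sequence (Equation~(1)), alongside $H^0_{\mathcal{J}+\mathcal{K}}(R) = H^1_{\mathcal{J}+\mathcal{K}}(R) = 0$ coming from $\depth(\mathcal{J}+\mathcal{K}, R) \geq 2$, the sequence collapses to an $R$-module isomorphism $H^0_\mathcal{J}(R) \oplus H^0_\mathcal{K}(R) \xrightarrow{\sim} R$, realized by $(a, b) \mapsto a - b$. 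Both summands are nonzero: every minimal prime is associated to $R$, so for $i \in A$ there is $y \neq 0$ with $\wp_i \cdot y = 0$, whence $\mathcal{J} \cdot y \subseteq \wp_i \cdot y = 0$ and thus $y \in H^0_\mathcal{J}(R)$, and analogously for $\mathcal{K}$. Neither summand equals $R$ either, since neither $\mathcal{J}$ nor $\mathcal{K}$ is nilpotent (as $A, B$ are both proper subsets of $\{1, \ldots, r\}$). This realizes $R$ as a nontrivial direct sum of $R$-submodules, producing a nontrivial idempotent, which is impossible in a local ring. The contradiction yields $\c(R) \geq \depth(R) - 1$.

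For the $S_k$ statement we localize. For each prime $\mathcal{P}$ of $R$, the ring $R_\mathcal{P}$ is catenary and local and inherits Serre's condition $S_k$, whence $\depth(R_\mathcal{P}) \geq \min\{k, \dim R_\mathcal{P}\} = \min\{k, \height(\mathcal{P})\}$. Applying the first part to $R_\mathcal{P}$ gives $\c(R_\mathcal{P}) \geq \min\{k, \height(\mathcal{P})\} - 1$. Since the minimal primes of $R_\mathcal{P}$ are precisely the contractions of the minimal primes of $R$ contained in $\mathcal{P}$, invoking Lemma~\ref{milhouse}(a) (equivalently Remark~\ref{hilbert}) at $R_\mathcal{P}$ produces the required chain $\wp = \wp_1, \ldots, \wp_s = \wp'$ of minimal primes inside $\mathcal{P}$ satisfying the claimed dimension bound, after translating localized dimensions back to $R$ via catenarity. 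That $R$ itself is connected in codimension~1 is then the special case $\mathcal{P} = \mathfrak{m}$, together with the standard consequence that $S_2$ forces equidimensionality in the quotient-of-Cohen-Macaulay setting used in Proposition~\ref{serre}.

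The main technical hurdle will be the rigorous handling of the Mayer--Vietoris collapse in the first part: verifying both injectivity (from $\depth(\mathcal{J}+\mathcal{K}, R) \geq 1$) and surjectivity (from $\depth \geq 2$) of the structural morphism, and confirming that $H^0_\mathcal{J}(R)$ and $H^0_\mathcal{K}(R)$ are genuine proper nonzero $R$-submodules even when $R$ is not reduced. Once that is secured, the second part reduces to localization and catenarity bookkeeping.
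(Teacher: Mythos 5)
Your proof of the first inequality is correct, and it is essentially the paper's argument made self-contained: the paper derives $\depth(\mathcal{J}+\mathcal{K},R)\leq 1$ by quoting Hartshorne's connectedness lemma (Eisenbud, Theorem 18.12) and contradicts it with Lemma \ref{maggie}, whereas you run the implication the other way ($\dim R/(\mathcal{J}+\mathcal{K})<\depth(R)-1$ plus Lemma \ref{maggie} gives $\depth(\mathcal{J}+\mathcal{K},R)\geq 2$) and then reprove Hartshorne's lemma inline via the Mayer--Vietoris collapse and the resulting nontrivial idempotent. That argument, including the checks that neither $H^0_{\mathcal{J}}(R)$ nor $H^0_{\mathcal{K}}(R)$ is all of $R$ because $A$ and $B$ are proper, is sound. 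The localization argument for the chain of minimal primes inside a given $\mathcal{P}$ is also exactly the paper's.

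There is, however, a genuine gap in your deduction that $R$ is connected in codimension $1$: this is \emph{not} the special case $\mathcal{P}=\mathfrak{m}$. For $k=2$ and $\mathcal{P}=\mathfrak{m}$ your bound reads $\c(R)\geq \min\{2,\dim R\}-1=1$, which is far weaker than $\c(R)\geq \dim R-1$ as soon as $\dim R\geq 3$. The correct move (and the one the paper makes) is to assume a disconnection in codimension $\geq 2$, i.e.\ ideals $\mathcal{J}=\cap_{i\in A}\wp_i$ and $\mathcal{K}=\cap_{j\in B}\wp_j$ with $\mathcal{J}\cap\mathcal{K}$ nilpotent, neither nilpotent, and $\height(\mathcal{J}+\mathcal{K})\geq 2$, and then localize at a prime $\mathcal{P}$ \emph{minimal over} $\mathcal{J}+\mathcal{K}$: there $\mathcal{V}(\mathcal{J}R_{\mathcal{P}})$ and $\mathcal{V}(\mathcal{K}R_{\mathcal{P}})$ disconnect the punctured spectrum, so $\c(R_{\mathcal{P}})=0$, contradicting $\c(R_{\mathcal{P}})\geq\depth(R_{\mathcal{P}})-1\geq\min\{2,\height(\mathcal{P})\}-1=1$. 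Note also that passing between ``$\dim R/(\mathcal{J}+\mathcal{K})<\dim R-1$'' and ``$\height(\mathcal{P})\geq 2$ for a minimal prime $\mathcal{P}$ of $\mathcal{J}+\mathcal{K}$'' requires equidimensionality of $R$; your appeal to ``$S_2$ forces equidimensionality in the quotient-of-Cohen--Macaulay setting'' imports a hypothesis that Proposition \ref{barney} does not contain (it only assumes $R$ catenary), so this point needs an actual argument rather than a reference to the setting of Proposition \ref{serre}.
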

\begin{proof}
We suppose that $\c(R) < \depth(R) - 1$, and look for a
contradiction.

Note that $\sdim R \geq \depth(R)$ (\cite[Theorem 17.2]{matsu}).
From this it follows that $\c(R) < \depth(R) - 1$ if and only if
there exist two ideals, $\mathcal{J}$ and $\mathcal{K}$, of $R$,
such that $\mathcal{J} \cap \mathcal{K}$ is nilpotent,
$\sqrt{\mathcal{J}}$ and $\sqrt{\mathcal{K}}$ are incomparable,
and $\dim R/(\mathcal{J} + \mathcal{K}) < \depth(R)-1$. From the
first two conditions, using the theorem of Hartshorne
(\cite[Theorem 18.12]{eisenbud}), it follows that $\depth(\mathcal{J}
+ \mathcal{K}, R) \leq 1$. Then, from Lemma \ref{maggie}, we have
$\dim R/(\mathcal{J} + \mathcal{K}) \geq \depth(R)-1$, which is a
contradiction.

Now suppose that $R$  satisfies $S_2$ condition. By contradiction, as above, let us  suppose there exist two ideals $\mathcal{J}$ and $\mathcal{K}$ of $R$,
such that $\Spec(R)\setminus \mathcal{V}(\mathcal{J}+\mathcal{K}) \subseteq \mathcal{J} \cap \mathcal{K}$,
$\sqrt{\mathcal{J}}$ and $\sqrt{\mathcal{K}}$ are incomparable, and $\dim R/(\mathcal{J} + \mathcal{K}) < \dim R-1$. Then localize at a minimal prime $\wp$ of $\mathcal{J} + \mathcal{K}$: since $\height(\wp)\geq 2$ it follows by the assumption that $\depth(R_{\wp}) \geq 2$. But $\mathcal{V}(\mathcal{J}R_{\wp})$ and $\mathcal{V}(\mathcal{K}R_{\wp})$ provide a disconnection for the punctured spectrum of $R_{\wp}$, so $\c(R_{\wp})=0<\depth(R_{\wp})-1$, contradicting the first part of the statement.  

For the last part of the statement suppose there exist two minimal prime ideals $\wp$ and $\wp'$ of $R$ and a prime ideal $\mathcal{P}\supseteq \wp + \wp'$ for which the condition is not satisfied. Clearly the minimal primes of $R_{\mathcal{P}}$ are the minimal primes of $R$ contained in $\mathcal{P}$, so the conclusion follows by the previous part and by Remark \ref{hilbert}.
\end{proof}

From the above proposition immediately comes the following corollary.

\begin{cor}\label{smithers}

Let $(R,\mathfrak{m})$ be a Cohen-Macaulay local ring. Then, for any two minimal prime ideals  $\wp$ and $\wp'$ of $R$ and for each prime ideal $\mathcal{P} \supseteq \wp + \wp'$ there exists a sequence $\wp=\wp_1,$ $\ldots ,\wp_s=\wp'$ such that $\heigth{\wp_i}+ \heigth{\wp_{i+1}}\leq 1$ and $\wp_j \subseteq \mathcal{P}$ is a minimal prime of $R$ for each $i=1, \ldots , s-1$ and $j=1, \ldots ,s$. In particular $R$ is connected in codimension 1.

\end{cor}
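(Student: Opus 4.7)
The plan is to deduce the corollary as a direct specialization of Proposition~\ref{barney}. First I would verify that a Cohen-Macaulay local ring $R$ satisfies all the hypotheses of that proposition: it is catenary (indeed universally catenary), and it satisfies Serre's condition $S_k$ for every $k \geq 1$, since $\depth R_{\mathfrak{q}} = \dim R_{\mathfrak{q}}$ at every prime $\mathfrak{q}$. Thus Proposition~\ref{barney} applies with any $k \geq 2$, and in particular with $k = \dim R$.

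For the \emph{in particular} clause, the first part of Proposition~\ref{barney} gives
\[ \c(R) \geq \depth(R) - 1 = \dim R - 1, \]
so $R$ is connected in codimension $1$.

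For the main statement, I would fix two minimal primes $\wp, \wp'$ of $R$ together with a prime $\mathcal{P} \supseteq \wp + \wp'$, and apply the second part of Proposition~\ref{barney}. This directly yields a chain $\wp = \wp_1, \ldots, \wp_s = \wp'$ of minimal primes of $R$, all contained in $\mathcal{P}$, realizing the dimension bound supplied by that proposition. The height condition on consecutive pairs asserted in the corollary is then obtained by rewriting that dimension bound: since $R_{\mathcal{P}}$ is Cohen-Macaulay it is equidimensional, so $\dim R_{\mathcal{P}}/\wp_i R_{\mathcal{P}}$ is independent of $i$, and catenarity converts the inequality on $\dim R_{\mathcal{P}}/(\wp_i + \wp_{i+1})R_{\mathcal{P}}$ into the asserted inequality on $\height\bigl((\wp_i + \wp_{i+1})R_{\mathcal{P}}\bigr)$.

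No serious obstacle is expected. The only step requiring a bit of care is the translation between the dimension inequality furnished by Proposition~\ref{barney} and the height inequality recorded in the corollary, which amounts to a routine application of the dimension formula in the Cohen-Macaulay, hence catenary and equidimensional, setting.
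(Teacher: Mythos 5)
Your proof is correct and follows the paper's route exactly: the paper deduces Corollary \ref{smithers} immediately from Proposition \ref{barney}, and your observations that a Cohen--Macaulay local ring is catenary and satisfies $S_k$ for every $k$ (so one may take $k\geq\height(\mathcal{P})$, making $\min\{k,\height(\mathcal{P})\}=\height(\mathcal{P})=\dim R_{\mathcal{P}}$), together with the translation of the resulting dimension bound into the stated height bound via catenarity and equidimensionality of $R_{\mathcal{P}}$, supply precisely the details the paper leaves implicit. You have also, sensibly, read the corollary's condition as a bound on $\height(\wp_i+\wp_{i+1})$ rather than on the (zero) heights of the minimal primes themselves, which is clearly what is intended.
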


It is easy to show that Proposition \ref{barney} and Corollary \ref{smithers} hold if $R$ is a positively graded $k$-algebra, too. So by Theorem \ref{martina} it
follows immediately the answer to question of the introduction.

\begin{cor}\label{skinner}

Let $\omega \in (\mathbb{Z}_+)^n$ and $I$ a graded ideal. Then
\[ \c
(P/\init_{\omega}(I)) \geq \depth (P/I) - 1.
\]
In particular, if $P/I$ is Cohen-Macaulay, then
$P/\init_{\omega}(I)$ is connected in codimension 1.

\end{cor}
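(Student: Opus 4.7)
The plan is to combine Theorem \ref{martina} with the graded analogue of Proposition \ref{barney} (noting that, as the authors remark just before the statement, Proposition \ref{barney} carries over verbatim to positively graded $k$-algebras, since $P/I$ is catenary and graded). By itself, Theorem \ref{martina} only yields $\c(P/\init_{\omega}(I)) \geq \c(P/I)-1$, and Proposition \ref{barney} only yields $\c(P/I) \geq \depth(P/I)-1$; chaining them naively loses one unit. The key is to use the \emph{strict} inequality in Theorem \ref{martina} when $I$ has more than one minimal prime.

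Accordingly, I would split into two cases depending on the number of minimal primes of $I$. In the case where $I$ has a unique minimal prime, the support of $P/I$ is irreducible, so by Lemma \ref{milhouse}(b) we have $\c(P/I)=\sdim(P/I)=\dim(P/I)\geq \depth(P/I)$; Theorem \ref{martina} then gives
\[
\c(P/\init_{\omega}(I)) \;\geq\; \c(P/I)-1 \;\geq\; \depth(P/I)-1.
\]
In the case where $I$ has at least two minimal primes, the strict version of Theorem \ref{martina} gives $\c(P/\init_{\omega}(I)) \geq \c(P/I)$, and the graded version of Proposition \ref{barney} applied to $P/I$ yields $\c(P/I) \geq \depth(P/I)-1$, so again
\[
\c(P/\init_{\omega}(I)) \;\geq\; \c(P/I) \;\geq\; \depth(P/I)-1.
\]

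For the final assertion, if $P/I$ is Cohen-Macaulay, then $\depth(P/I)=\dim(P/I)$, and moreover $\dim(P/\init_{\omega}(I)) = \dim(P/I)$ (this is a standard property of Gr\"obner deformations, and follows for instance from Lemma \ref{ralph}(6) together with Remark \ref{lennie}(4)). Therefore the inequality just proved becomes $\c(P/\init_{\omega}(I)) \geq \dim(P/\init_{\omega}(I))-1$, i.e.\ $P/\init_{\omega}(I)$ is connected in codimension $1$. The only nontrivial point in the whole argument is the bookkeeping of the ``$-1$'' that must be absorbed by the strict inequality clause of Theorem \ref{martina}; everything else is a direct citation of previously established results.
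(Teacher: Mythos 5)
Your proof is correct and follows the same route as the paper: the paper obtains Corollary \ref{skinner} by combining Theorem \ref{martina} with the graded version of Proposition \ref{barney}, exactly as you do. Your explicit case split on the number of minimal primes of $I$ --- using Lemma \ref{milhouse}(b) when $I$ is primary to a single minimal prime, and the strict-inequality clause of Theorem \ref{martina} otherwise, so that only one of the two ``$-1$''s survives --- is precisely the bookkeeping the paper leaves implicit behind ``it follows immediately''.
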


The following is an example due to Conca.

\begin{es}
Consider the graded ideal
\[I = (x_1x_5+x_2x_6+x_4^2,x_1x_4+x_3^2-x_4x_5,x_1^2+x_1x_2) \subseteq \mathbb{C}[x_1, \ldots ,x_6]=:P.\]
One can verify that $I$ is a prime ideal which is  a complete intersection; in particular $P/I$ is a Cohen-Macaulay domain of dimension 3. However the radical of the initial ideal of $I$ with respect to the lexicographical order $\prec$ is
\[\sqrt{LT_{\prec}(I)} = (x_1,x_2,x_3)\cap (x_1,x_3,x_6)\cap (x_1,x_2,x_5)\cap (x_1,x_4,x_5)\]
and, albeit it is connected in codimension 1, $P/\sqrt{LT_{\prec}(I)}$ is not Cohen-Macaulay. This can be seen considering $\wp = (x_1,x_3,x_6)$, $\wp' = (x_1,x_4,x_5)$ and $\mathcal{P} = \wp+\wp'$ $=$ $(x_1,x_3,x_4,x_5,x_6)$, and applying Corollary \ref{smithers}.\\
This example also provides an ideal $I$ for which $\cd(P,I)<\cd(P,LT_{\prec}(I))$: in fact $\cd(P,I)=3$ because $I$ is a complete intersection of height $3$, and $\cd(P,LT_{\prec}(I))=\operatorname{projdim}(P/\sqrt{LT_{\prec}(I)})>3$ where the equality follows by a result of Lyubeznik in \cite{lyubeznik}.
\end{es}

\begin{cor}
Let $I$ be a graded ideal such that $\Proj(P/I)$ is connected (for instance $\depth(P/I) \geq 2$) and $\prec$ a monomial order. Then
\[\depth(P/\sqrt{LT_{\prec}(I)})\geq 2.\]
\end{cor}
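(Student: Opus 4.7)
The plan is to combine Theorem \ref{martina} with the combinatorial description, via Hochster's formula, of depth for a Stanley-Reisner ring. The key observation is that $J:=\sqrt{\LT_\prec(I)}$ is a squarefree monomial ideal, so $P/J\cong k[\Delta]$ is the Stanley-Reisner ring of a simplicial complex $\Delta$, and $\depth k[\Delta]\geq 2$ has a clean combinatorial criterion in terms of $\Delta$.

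First I would upgrade the hypothesis to a statement on connectivity dimension. Since $I$ is graded and, as the for-instance condition $\depth(P/I)\geq 2$ shows, the case of interest is $\dim(P/I)\geq 2$, connectedness of $\Proj(P/I)$ is equivalent to $\c(P/I)\geq 1$ by the chain characterization of Remark \ref{hilbert}. Picking a weight vector $\omega\in(\mathbb{Z}_+)^n$ representing $\prec$ for $I$ via Theorem \ref{telespallabob}, Theorem \ref{martina} yields
\[ \c(P/\LT_\prec(I))\geq\c(P/I)-1, \]
with strict inequality when $I$ has more than one minimal prime. A short case split---multi-minimal-prime case via the strict inequality, unique-minimal-prime case via $\c(P/I)=\dim(P/I)\geq 2$---upgrades this to $\c(P/\LT_\prec(I))\geq 1$. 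As taking radicals preserves supports, $\Proj(P/J)$ is connected.

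Now write $P/J=k[\Delta]$. Connectedness of $\Proj(k[\Delta])$ is equivalent to $\Delta$ being a connected simplicial complex, since two components $V_+(\wp_F),V_+(\wp_G)$ corresponding to facets $F,G$ of $\Delta$ meet exactly when $F\cap G\neq\emptyset$. Together with $\dim\Delta\geq 1$, connectedness of $\Delta$ implies every vertex lies on some edge. Hochster's formula for the $\mathbb{Z}^n$-graded local cohomology of $k[\Delta]$ then gives $H_\mathfrak{m}^0(k[\Delta])=0$ (since $\Delta$ is nonempty) and $H_\mathfrak{m}^1(k[\Delta])=0$: in $\mathbb{Z}^n$-degree zero it is $\tilde H^0(\Delta;k)=0$ by connectedness, and in negative degrees supported at a single vertex $v$ it is $\tilde H^{-1}(\mathrm{lk}_\Delta(v);k)=0$ by nonemptiness of the link. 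Hence $\depth(P/J)\geq 2$.

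The main obstacle I anticipate is the case analysis needed to upgrade Theorem \ref{martina}'s $\c(P/\LT_\prec(I))\geq\c(P/I)-1$ to the sharp $\c\geq 1$; one must carefully handle the single-minimal-prime case, where the strict-inequality clause of Theorem \ref{martina} does not apply, by falling back on the dimension of $P/I$. Once connectedness of $\Proj(P/J)$ is established, the Stanley-Reisner step via Hochster's formula is standard.
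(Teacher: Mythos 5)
Your proof is correct, and its second half takes a genuinely different route from the paper's. Both arguments share the first step: Theorem \ref{telespallabob} plus Theorem \ref{martina} (with the case split on the number of minimal primes that you spell out, and which the paper leaves implicit) give that $\Proj(P/\LT_\prec(I))$, hence $\Proj(P/\sqrt{\LT_\prec(I)})$, is connected. From there the paper stays within the cohomological-dimension framework of Section \ref{chap1}: Hartshorne's theorem on $\mathbb{P}^{n-1}$ (\cite[Theorem 3.2, p.\ 205]{hartshorne4}) converts connectedness of $Z$ into $\cd(P,\LT_\prec(I))\leq n-2$, and Lyubeznik's identity $\cd(P,J)=\operatorname{projdim}(P/\sqrt{J})$ for monomial ideals, together with Auslander--Buchsbaum, yields $\depth(P/\sqrt{\LT_\prec(I)})\geq 2$. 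You instead pass to the Stanley--Reisner ring $k[\Delta]$ and kill $H^0_{\mathfrak m}$ and $H^1_{\mathfrak m}$ directly via Hochster's formula; this is more self-contained (it avoids both Hartshorne's and Lyubeznik's theorems) at the cost of leaving the paper's cohomological-dimension theme, and your Hochster computation (connectedness of $\Delta$ plus $\dim\Delta\geq 1$ forces the vanishing of $\tilde H^{-1}$ of all vertex links and of $\tilde H^0(\Delta)$) is accurate. One caveat applies equally to both proofs: the statement implicitly needs $\dim(P/I)\geq 2$ (a single reduced point gives a connected $\Proj(P/I)$ with $\depth(P/\sqrt{\LT_\prec(I)})=1$); the paper needs this because Hartshorne's theorem requires $\dim Z\geq 1$, and you need it in the unique-minimal-prime case and to guarantee $\dim\Delta\geq 1$, so your explicit acknowledgment of this hypothesis is a point in your favor rather than a gap.
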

\begin{proof}
Theorem \ref{martina} implies that $\Proj(P/LT_{\prec}(I))$ is connected. Then $\cd(P,LT_{\prec}(I))$ $\leq$ $ n-2$ by \cite[Theorem 3.2, p. 205]{hartshorne4}, and \cite{lyubeznik} implies that $\depth(P/\sqrt{LT_{\prec}(I)})\geq 2$.
\end{proof}

\end{document}